\newtheorem{theorem}{Theorem}
\theoremstyle{plain}
\newtheorem{definition}{Definition}
\newtheorem{example}{Example}
\newtheorem{proposition}{Proposition}
\numberwithin{equation}{section}
\begin{document}
\title[Coverings and Actions of Structured Lie Groupoids I.]{
Coverings and Actions of Structured Lie Groupoids I.}
\author{M. Habil Gursoy, Ilhan Icen and A. Fatih Ozcan}
\address{Department of Mathematics \\
Science and Art Faculty \\
Inonu University \\
Malatya / TURKEY} \email{mhgursoy@gmail.com (Corresponding author) }
\email{iicen@inonu.edu.tr }
\email{afozcan@inonu.edu.tr }
\date{XX XX, XXXX}
\subjclass[2000]{ 22A22,57M10} \keywords{Lie groupoid, covering,
smooth action, Lie group-groupoid}

\begin{abstract}
In this work we deal with coverings and actions of Lie
group-groupoids being a sort of the structured Lie groupoids.
Firstly, we define an action of a Lie group-groupoid on some Lie
group and the smooth coverings of Lie group-groupoids. Later, we
show the equivalence of the category of smooth actions of Lie
group-groupoids on Lie groups  and the category of smooth coverings
of Lie group-groupoids.
\end{abstract}

\maketitle

\section{Introduction}

The theory of covering space has an important role in the algebraic
topology. When studied on categories and groupoids, the concept of
covering is meaningful by investigation of relationships between
fundamental groupoids of covering spaces and those of the base
spaces.

The first papers in this area were written by Brown and Higgins
\cite{higgins,Brw1, Brw2}. Brown defined the fundamental groupoid
$\pi_{1}X$ associated to a topological space $X$ and obtained the
covering morphism $\pi_{1}p:\pi_{1}\widetilde{X}\rightarrow
\pi_{1}X$ of groupoids for a given covering map
$p:\widetilde{X}\rightarrow X$ of topological spaces. Thus he proved
the equivalence of the category $TCov(X)$ of coverings of $X$ and
the category $GdCov(\pi_{1}X)$ of coverings of fundamental groupoid
$\pi_{1}X$, where $X$ has universal covering space \cite{Brw2}.

Further, the relation between notions of covering and action for
groupoids was studied by Gabriel and Zisman \cite{gabriel}. They
proved the equivalence of the category $GdCov(G)$ of coverings of a
groupoid $G$ and the category $GdOp(G)$ of actions of $G$ on the
sets.

Another concept considered in this paper is group-groupoid notion
which is a group object in the category of groupoids. It was defined
by Brown and Spencer \cite{Brw6}. They proved that if $X$ is a
topological group, then the fundamental groupoid $\pi_{1}X$ becomes
a group-groupoid.

After defining the topological groupoid by Ehresmann all of the
above results were also given for topological groupoids \cite{Brw1}.
In \cite{IFH}, it was proved that the topological group structure of
a topological group-groupoid how was lifted to a topological
universal covering groupoid.

In this work we deal with smoothness of all these results. It is
well-known that for a connected smooth manifold $M$ there exist a
universal covering manifold $\widetilde{M}$ and a smooth covering
map $p:\widetilde{M}\rightarrow M$. Also, from the manifold theory
there exist a simply connected universal covering Lie group
$\widetilde{G}$ of a connected Lie group $G$ and a smooth covering
map $p:\widetilde{G}\rightarrow G$ which is a Lie group homomorphism
at the same time.

By using these facts, we show that the fundamental Lie groupoid
$\pi_{1}M$ associated to a connected Lie group $M$ is a Lie
group-groupoid. Thus we constitute the category $LGCov(M)$ of smooth
coverings of a Lie group $M$ and the category $LGGdCov(\pi_{1}M)$ of
coverings of Lie group-groupoid $\pi_{1}M$. Then we prove the
equivalence of these categories. Also, we show that the category
$LGGdCov(G)$ of coverings of a Lie group-groupoid $G$ and the
category $LGGdOp(G)$ of actions of Lie group-groupoid $G$ on Lie
groups.

Throughout the paper, all our manifolds we consider are assumed to
be smooth and second countable.

\section{Lie Group-Groupoids}

In this section, we will give the basic definitions and concepts
related to Lie group-groupoids. But we recall, in advance, the basic
facts about groupoids and Lie groupoids.

A groupoid is a category in which every arrow is invertible. More
precisely, a groupoid consists of two sets $G$ and $G_{0}$ called
the set of arrows (or morphisms) and the set of objects of groupoid
respectively, together with two maps $\alpha, \beta :G\rightarrow
G_{0}$ called source and target maps respectively, a map
$\epsilon:G_{0}\rightarrow G$,$x\mapsto \epsilon(x)=1_{x}$ called
the object map, an inverse map $i:G\rightarrow G$,$a\mapsto a^{-1}$
and a composition $G_{2}=G ~_{\alpha}\times_{\beta}G\rightarrow
G$,$(b,a)\mapsto b\circ a$ defined on the pullback
\begin{center}
$G~_{\alpha}\times_{_{\beta}}G=\{(b,a)\mid \alpha(b)=\beta(a)\}$.
\end{center}

These maps should satisfy the following conditions:

\begin{enumerate}
\item $\alpha (b\circ a)=\alpha (a)$ and $\beta (b\circ a)=\beta
(b)$, for all $(b,a)\in G_{2}$,

\item $c\circ (b\circ a)=(c\circ b)\circ a$ such that $\alpha
(b)=\beta(a)$ and $\alpha (c)=\beta(b)$, for all $a,b,c\in G$,

\item $\alpha(1_{x})=\beta (1_{x})=x$, for all $x\in G_{0}$,

\item $a\circ 1_{\alpha(a)}=a$ and $1_{\beta(a)}\circ a=a$, for all
$a\in G$,

\item $\alpha (a^{-1})=\beta (a)$ and $\beta (a^{-1})=\alpha (a)$,
$a^{-1}\circ a=1_{\alpha(a)}$ and $a\circ a^{-1}=1_{\beta (a)}$
\cite{Br-Icen}.
\end{enumerate}

Let $G$ be a groupoid. For all $x,y\in G_{0}$, we denote $G(x,y)$
the set of all arrows $a\in G$ such that $\alpha (a)=x$ and $\beta
(a)=y$. For $x\in G_{0}$, we write $St_{G}x$ for the set of all
arrows started at $x$, and $CoSt_{G}x$ for the set of all arrows
ended at $x$. The object or vertex group at $x$ is $G\{x\}=\{a\in
G\mid \alpha(a)=\beta(a)=x\}$.

Let $G$ and $H$ be two groupoids. A groupoid morphism from $H$ to
$G$ is a pair $(f,f_{0})$ of maps $f:H\rightarrow G$ and
$f_{0}:H_{0}\rightarrow G_{0}$ such that $\alpha_{G}\circ
f=f_{0}\circ \alpha_{H}$, $\beta_{G}\circ f=f_{0}\circ \beta_{H}$
and $f(b\circ a)=f(b)\circ f(a)$ for all $(b,a)\in H_{2}$
\cite{Br-Icen,Brw2}.

Throughout the work we shall assume that the set of objects $G_{0}$
and $\alpha$-fibers $\alpha^{-1}(x)=St_{G}x$, $x\in G_{0}$ are
Hausdorff.

\begin{definition}
A groupoid $G$ over $G_{0}$ is called Lie groupoid if $G$ and
$G_{0}$ are manifolds, $\alpha$ and $\beta$ are surjective
submersions and the composition map is smooth \cite{Br-Icen}.
\end{definition}

It follows that, $\epsilon$ is an immersion, the inverse map is a
diffeomorphism, the sets $St_{G}x$, $CoSt_{G}x$ and $G(x,y)$ are
closed submanifolds of $G$ for all $x,y\in G_{0}$ and all vertex
groups are Lie groups. Also since $\alpha$ and $\beta$ are
submersions, $G_{2}$ is a closed submanifold of $G\times G$
\cite{McKenzie}.

The left-translation (right translation) for $a\in G(x,y)$ is the
map $L_{a}:CoSt_{G}x \rightarrow CoSt_{G}y$, $b\mapsto a\circ b$
($R_{a}:St_{G}y \rightarrow St_{G}x$, $b\mapsto b\circ a$) which is
a  diffeomorphism \cite{Br-Icen}.

\begin{example}
Let $M$ be a manifold. The product manifold $M\times M$ is a Lie
groupoid over $M$ in the following way: $\alpha$ is the second
projection and $\beta$ is the first projection; $1_{x}=(x,x)$ for
all $x\in M$ and $(x,y)\circ (y,z)=(x,z)$ \cite{Moer-Mrcun}.
\end{example}

\begin{definition}
A morphism between Lie groupoids $H$ and $G$ is a groupoid morphism
$(f,f_{0})$ such that $f$ and $f_{0} $ are smooth \cite{McKenzie}.
\end{definition}

Now, we can introduce definition of Lie group-groupoids.

\begin{definition}
A Lie group-groupoid is a Lie groupoid endowed with a structure of
Lie group such that the addition $m:G\times G\rightarrow G$,
$(a,b)\mapsto a+b$, the unit map $e:\ast \rightarrow G$, $\ast
\mapsto e(\ast )=1_{e}$ and inverse map $\overline{u}:G\rightarrow
G$, $a\mapsto -a$, which are the structure maps of Lie group, are
Lie groupoid morphisms. Also there exists an interchange law
\begin{equation*}
(b\circ a)+(d\circ c)=(b+d)\circ (a+c).
\end{equation*}
\end{definition}

\begin{definition}
Let $G$ and $H$ be two Lie group-groupoids. A morphism
$f:H\rightarrow G$ of Lie group-groupoids is a morphism of
underlying Lie groupoids preserving the Lie group structure, i.e.,
$f(a+b)=f(a)+f(b)$ for $a,b\in H$.
\end{definition}

\begin{example}\label{ornek.3.2.1}
Let $G$ be a Lie group. Then we constitute a Lie group-groupoid
$G\times G$ with object manifold $G$ as the following way:

A morphism from an object $x$ to another one $y$ is a pair of
$(y,x)$. The source map is defined by $\alpha (y,x)=x$, the target
map is defined by $\beta (y,x)=y$, the object map is defined by
$x\mapsto (x,x)$ for any $x\in G $, the inverse of $(y,x)$ is
defined by $(x,y)$ and the composition is defined by $(z,y)\circ
(y,x)=(z,x)$ for $(y,x)$, $(z,y)\in G\times G$. Since $G$ is a Lie
group, $G\times G$ is also a Lie group with the operation
$(x,y)+(z,t)=(x+z,y+t)$ defined by the operation of $G$. The unit
element of this group is $(e,e)$ where $e$ is the unit element of
$G$, and the inverse in the group of $(y,x)$ is $(-y,-x)$. Also
$G\times G$ is the Lie groupoid called the banal groupoid. Now let
us show that the group structure maps of $G\times G$ are groupoid
morphisms.

For $m:(G\times G)\times (G\times G)\rightarrow G\times G,$%
\begin{align*}
m(((z,y),(z^{^{\prime}},y^{^{\prime}}))\circ((y,x),(y^{^{\prime}},x^{^{\prime
}})))  &  =m((z,y)\circ(y,x),(z^{^{\prime}},y^{^{\prime}})\circ(y^{^{\prime}%
},x^{^{\prime}}))\\
&  =m((z,x),(z^{^{\prime}},x^{^{\prime}}))\\
&  =(z+z^{^{\prime}},x+x^{^{\prime}})
\end{align*}
and
\begin{align*}
m((z,y),(z^{^{\prime}},y^{^{\prime}}))\circ m((y,x),(y^{^{\prime}}%
,x^{^{\prime}}))  &  =((z,y)+(z^{^{\prime}},y^{^{\prime}}))\circ
((y,x)+(y^{^{\prime}},x^{^{\prime}}))\\
&  =(z+z^{^{\prime}},y+y^{^{\prime}})\circ(y+y^{^{\prime}},x+x^{^{\prime}})\\
&  =(z+z^{^{\prime}},x+x^{^{\prime}})
\end{align*}

Similarly, it can be shown that the unit map and the inverse map of
the group are also groupoid morphisms. Furthermore, since the group
structure maps of $G\times G $ are defined by the operations of Lie
group $G$, they are also smooth. Consequently, $G\times G$ is a Lie
group-groupoid.
\end{example}

\bigskip

This example defines a functor from the category $LGrp$ of Lie
groups to the category $LGGd$ of Lie group-groupoids. Let us give it
by the following proposition.

\begin{proposition}
There exists a functor $\Omega :LGrp\rightarrow LGGd$ from the
category $LGrp $ of Lie groups to the category $LGGd$ of Lie
group-groupoids.
\end{proposition}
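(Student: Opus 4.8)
The plan is to define $\Omega$ on objects via the banal construction of Example~\ref{ornek.3.2.1} and on morphisms by applying a Lie group homomorphism in each coordinate. For a Lie group $G$, set $\Omega(G)=G\times G$, the Lie group-groupoid over $G$ described in Example~\ref{ornek.3.2.1}. For a morphism $\phi:G\rightarrow H$ of Lie groups, define $\Omega(\phi):G\times G\rightarrow H\times H$ on arrows by $\Omega(\phi)(y,x)=(\phi(y),\phi(x))$ and on objects by $\phi_{0}=\phi$. The proof then reduces to checking that each $\Omega(\phi)$ is a morphism of Lie group-groupoids and that $\Omega$ respects identities and composition.

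First I would verify that $(\Omega(\phi),\phi_{0})$ is a morphism of the underlying Lie groupoids. Smoothness is immediate, since $\phi$ is smooth and hence so is $\phi\times\phi$. Compatibility with the source and target maps is $\alpha_{H}(\Omega(\phi)(y,x))=\phi(x)=\phi_{0}(\alpha_{G}(y,x))$ and likewise for $\beta$. Preservation of composition follows from $\Omega(\phi)((z,y)\circ(y,x))=\Omega(\phi)(z,x)=(\phi(z),\phi(x))=(\phi(z),\phi(y))\circ(\phi(y),\phi(x))=\Omega(\phi)(z,y)\circ\Omega(\phi)(y,x)$, and units are preserved because $\Omega(\phi)(1_{x})=(\phi(x),\phi(x))=1_{\phi(x)}$.

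Next I would check that $\Omega(\phi)$ preserves the Lie group structure, so that it is a morphism of Lie group-groupoids in the required sense. Since $\phi$ is a group homomorphism, $\Omega(\phi)((y,x)+(z,t))=\Omega(\phi)(y+z,x+t)=(\phi(y)+\phi(z),\phi(x)+\phi(t))=(\phi(y),\phi(x))+(\phi(z),\phi(t))=\Omega(\phi)(y,x)+\Omega(\phi)(z,t)$, and the unit $(e_{G},e_{G})$ together with the group inverse are preserved because $\phi(e_{G})=e_{H}$ and $\phi(-g)=-\phi(g)$. Finally, functoriality is a direct computation: $\Omega(\mathrm{id}_{G})(y,x)=(y,x)$ gives $\Omega(\mathrm{id}_{G})=\mathrm{id}_{\Omega(G)}$, and for $\phi:G\rightarrow H$ and $\psi:H\rightarrow K$ one has $\Omega(\psi\circ\phi)(y,x)=(\psi(\phi(y)),\psi(\phi(x)))=\Omega(\psi)(\Omega(\phi)(y,x))$, with the analogous identity on objects. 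I do not expect a genuine obstacle here; the only point needing attention is confirming that $\Omega(\phi)$ satisfies \emph{simultaneously} the Lie groupoid morphism axioms and the compatibility with the group operations, so that its image really lies in the morphism set of $LGGd$ and not merely among $LGGd$-objects joined by a groupoid map.
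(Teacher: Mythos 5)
Your proposal is correct and follows essentially the same route as the paper: $\Omega$ sends $G$ to the banal Lie group-groupoid $G\times G$ of Example~\ref{ornek.3.2.1} and a homomorphism $\phi$ to $\phi\times\phi$, after which one checks compatibility with both the groupoid composition and the group addition, plus smoothness. You are somewhat more thorough than the paper in also verifying the source/target/unit conditions and the functoriality identities $\Omega(\mathrm{id})=\mathrm{id}$ and $\Omega(\psi\circ\phi)=\Omega(\psi)\circ\Omega(\phi)$ explicitly, which the paper leaves implicit.
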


\begin{proof}
Let $G$ be a Lie group. Then, from Example \ref{ornek.3.2.1},
$G\times G$ is a Lie group-groupoid. If $f:G\rightarrow H$ is a
morphism of Lie groups, then $\Omega (f):G\times G\rightarrow
H\times H$ is a morphism of Lie group-groupoids. Indeed, $\Omega
(f)$ is defined by $(y,x)\mapsto (f(y),f(x)) $ and $\Omega (f)$
preserves the group structure. That is,
\begin{eqnarray*}
\Omega (f)((y,x)+(y^{^{\prime }},x^{^{\prime }})) &=&\Omega
(f)(y+y^{^{\prime }},x+x^{^{\prime }}) \\
&=&(f(y+y^{^{\prime }}),f(x+x^{^{\prime }})) \\
&=&(f(y)+f(y^{^{\prime }}),f(x)+f(x^{^{\prime }})) \\
&=&(f(y),f(x))+(f(y^{^{\prime }}),f(x^{^{\prime }})) \\
&=&\Omega (f)(y,x)+\Omega (f)(y^{^{\prime }},x^{^{\prime }}).
\end{eqnarray*}
By the same idea, $\Omega (f)((z,y)\circ (y,x))=$ $\Omega
(f)(z,x)=(f(z),f(x))$ and $\Omega (f)(z,y)\circ \Omega
(f)(y,x)=(f(z),f(y))\circ (f(y),f(x))=(f(z),f(x))$. Hence we obtain
that $\Omega (f)((z,y)\circ (y,x))=$ $\Omega (f)(z,y)\circ \Omega
(f)(y,x)$. Thus $\Omega (f)$ preserves the groupoid structure.
$\Omega (f)=(f,f)$ is smooth, since $f$ is smooth. Consequently
$\Omega (f)$ is a morphism of Lie group-groupoids.
\end{proof}

\begin{theorem}
The transitive component $C_{e}(G)$ of unit element $e$ in a Lie
group-groupoid $G$ is a Lie subgroup-groupoid which has the
structure of normal Lie subgroup with the addition of Lie group.
\end{theorem}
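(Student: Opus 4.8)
The plan is to exhibit $C_{e}(G)$ as the full subgroupoid of $G$ on the orbit of the identity object and then restrict to it, in turn, the Lie groupoid structure and the Lie group structure of $G$. Throughout, write $e\in G_{0}$ for the identity element of the underlying Lie group $G_{0}$; since $m$, $e$ and $\overline{u}$ are groupoid morphisms, the source and target maps $\alpha,\beta\colon G\rightarrow G_{0}$ are homomorphisms of Lie groups, and $\alpha(1_{e})=\beta(1_{e})=e$ is the identity of $G$. Set $O=\beta(\mathrm{St}_{G}e)$, the orbit of $e$, and $C=\alpha^{-1}(O)$. A short computation with composition and inversion of arrows gives $\alpha^{-1}(O)=\beta^{-1}(O)=C$, so $(C,O)$ is the full --- and in fact transitive --- subgroupoid of $G$ determined by $O$; closure of $C$ under $\circ$, $i$ and $\epsilon$ is immediate from $\alpha(b\circ a)=\alpha(a)$, $\alpha(a^{-1})=\beta(a)$ and $\alpha(1_{x})=x$. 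By the standard theory of orbits of a Lie groupoid (cf.\ \cite{McKenzie}), $O$ carries a canonical structure of initial submanifold of $G_{0}$ for which $C$, with its induced structure, is a transitive Lie subgroupoid of $G$; thus $C_{e}(G)$ is a Lie subgroupoid of $G$.

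Next I would show that $O$ is a normal Lie subgroup of $G_{0}$. On the one hand $M:=\mathrm{St}_{G}e=\alpha^{-1}(e)=\ker\alpha$ is a closed, embedded, normal Lie subgroup of $G$ (kernel of the submersive homomorphism $\alpha$), and $O=\beta(M)=\partial(M)$ where $\partial:=\beta|_{M}\colon M\rightarrow G_{0}$ is a homomorphism of Lie groups, so $O$ is a Lie subgroup of $G_{0}$. On the other hand, to see directly that $O$ is a subgroup, take $x,x'\in O$ with arrows $a\colon e\rightarrow x$ and $a'\colon e\rightarrow x'$: then $a+a'$ is an arrow $e\rightarrow x+x'$ and $-a=\overline{u}(a)$ is an arrow $e\rightarrow -x$ because $\alpha(-a)=-\alpha(a)=e$ and $\beta(-a)=-\beta(a)=-x$. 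For normality, given $g\in G_{0}$ and $x\in O$ with an arrow $a\colon e\rightarrow x$, the group element $1_{g}+a+1_{-g}\in G$ satisfies $\alpha(1_{g}+a+1_{-g})=g+e-g=e$ and $\beta(1_{g}+a+1_{-g})=g+x-g$, hence is an arrow $e\rightarrow g+x-g$ and $g+x-g\in O$. Therefore $O\trianglelefteq G_{0}$.

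The normality of $C$ in $G$ is then formal: $C=\alpha^{-1}(O)$ is the preimage of a normal subgroup of $G_{0}$ under the homomorphism $\alpha$, hence a normal subgroup of $G$. What remains --- and this is the crux of the argument --- is to check that the group operations $m$, $e$, $\overline{u}$ of $G$ restrict to $C$ (and their object-level counterparts to $O$) and are smooth for the structures fixed in the first paragraph. The restrictions exist because $C$ and $O$ are subgroups; smoothness holds because $O$ (resp.\ $C$) is an initial submanifold of $G_{0}$ (resp.\ $G$), so that any smooth map from a manifold into $G_{0}$ (resp.\ $G$) whose image lies in $O$ (resp.\ $C$) is automatically smooth into $O$ (resp.\ $C$) --- apply this to $m|_{C\times C}$, $\overline{u}|_{C}$ and the analogous object-level maps. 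Finally the interchange law $(b\circ a)+(d\circ c)=(b+d)\circ(a+c)$ is inherited by $C$ since it holds in $G$ and $C$ is closed under both operations. Hence $C_{e}(G)$ is a Lie subgroup-groupoid of $G$ which, through $(m,\overline{u})$, is a normal Lie subgroup of the Lie group $G$. The one genuine obstacle is exactly this compatibility of smooth structures --- knowing that the orbit $O$ is weakly embedded in $G_{0}$, equivalently that $C_{e}(G)$ is a true Lie subgroupoid rather than merely an abstract subgroupoid carried by a subset --- after which the subgroupoid, subgroup, normality and interchange-law verifications are all routine diagram chases.
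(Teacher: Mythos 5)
Your proposal is correct, and at its algebraic core it performs the same verifications as the paper: closure of the component under $+$ and $-$ comes from these being groupoid morphisms, and normality comes from conjugating an arrow ending at $e$ by an element of the group $G$. The organization is genuinely different, though. The paper fixes, for each object $x$ of the component, an arrow $T_{x}\in G(x,e)$ and computes directly on arrows, twice: $T_{x}-T_{x^{\prime}}\in G(x-x^{\prime},e)$ for the subgroup property and $g+T_{x}-g\in G(w+x-w,e)$ for normality, each carried out at both the object and the arrow level. You instead work at the object level first, identifying $C_{e}(G)_{0}$ with the orbit $O=\beta(\ker\alpha)$ --- the image of the closed normal Lie subgroup $\ker\alpha$ under the Lie group homomorphism $\beta$ --- and then get normality of the arrow space for free, since $C=\alpha^{-1}(O)$ is the preimage of a normal subgroup under a homomorphism. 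This buys you two things the paper does not supply: a reason why $O$ and $C$ carry manifold structures at all (the orbit theorem, respectively the image of a Lie group homomorphism, yielding initial submanifolds), where the paper only asserts ``there are structures of submanifold''; and, via initiality, an actual argument that the restricted operations are smooth, which is the one non-formal point of the theorem and which the paper also passes over. The only caveat is that you invoke the orbit theorem for Lie groupoids as a black box, but that is no worse than the paper's unjustified assertion, and your framing makes explicit exactly which external input is being used.
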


\begin{proof}
We know that $C_{e}(G)$ is the subgroupoid of $G$. For all object
selected in $C_{e}(G)_{0}$, let us consider the arrow $T_{x}\in
G(x,e)$. Let $a,b\in C_{e}(G)$ be arrows, where $a\in G(x,y)$ and
$b\in G(x^{^{\prime }},y^{^{\prime }})$. Thus, we obtain $a-b\in
G(x-x^{^{\prime }},y-y^{^{\prime }})$, where $T_{x}-T_{x^{^{\prime
}}}\in G(x-x^{^{\prime }},e)$ and $T_{y}-T_{y^{^{\prime }}}\in
G(y-y^{^{\prime }},e)$. Hence it follows that $x-x^{^{\prime
}},y-y^{^{\prime }}\in C_{e}(G)_{0}$ and $a-b\in C_{e}(G)$. In other
words, $C_{e}(G)$ is a subgroup. There are structures of submanifold
on $C_{e}(G)_{0}$ and $C_{e}(G)$. Also, the addition in $C_{e}(G)$
is the addition of $G$, so it is smooth. Thus, $C_{e}(G)$ is a Lie
group. Furthermore, the structure maps of the subgroupoid $C_{e}(G)$
are restrictions of structure maps in $G$, so they are smooth too.
Consequently $C_{e}(G)$ is a Lie group-groupoid. Now let us show
that $C_{e}(G)$ is normal. For any $a\in G(x,y)$, let $a\in
C_{e}(G)$ and $g\in G(w,z)$. Thus, for any $g\in G(w,z)$, $T_{x}\in
G(x,e)$ and $-g\in G(-w,-z)$ we have $g+T_{x}-g\in G(w+x-w,z+e-z)$,
and hence $g+T_{x}-g\in G(w+x-w,e)$. It follows $g+a-g\in
G(w+x-w,z+y-z)$. That is, we obtain $g+a-g\in C_{e}(G)$. We deduce
that $C_{e}(G)$ is a normal Lie subgroup. Consequently, $C_{e}(G)$
is a Lie subgroup-groupoid.
\end{proof}

\begin{theorem}
All characteristic groups in a Lie group-groupoid $G$ are linearly
diffeomorphic to each other.
\end{theorem}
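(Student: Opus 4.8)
The plan is to produce, for every object $x\in G_{0}$, an explicit Lie group isomorphism from the characteristic (i.e.\ vertex) group $G\{x\}$ onto the characteristic group $G\{e\}$ at the identity $e$ of the Lie group $G_{0}$; then for arbitrary $x,y$ the composite will carry $G\{x\}$ onto $G\{y\}$. The map I would use is translation by an identity arrow,
\[
\phi_{x}:G\{x\}\longrightarrow G\{e\},\qquad a\longmapsto a+1_{-x},
\]
where $-x$ denotes the inverse of $x$ in $G_{0}$ and $1_{-x}=\epsilon(-x)$.

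First I would verify that $\phi_{x}$ is well defined. Since the addition $m:G\times G\to G$ is a morphism of Lie groupoids, its object component is the addition of $G_{0}$, so $\alpha(a+1_{-x})=\alpha(a)+(-x)$ and $\beta(a+1_{-x})=\beta(a)+(-x)$; hence $a\in G(x,x)$ forces $a+1_{-x}\in G(e,e)=G\{e\}$. Next comes the homomorphism property, which is the one point where the group-groupoid structure really does the work: for $a,b\in G\{x\}$ the interchange law gives $(a\circ b)+(1_{-x}\circ 1_{-x})=(a+1_{-x})\circ(b+1_{-x})$, and since $1_{-x}\circ 1_{-x}=1_{-x}$ this is exactly $\phi_{x}(a\circ b)=\phi_{x}(a)\circ\phi_{x}(b)$. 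For bijectivity I would introduce $\psi_{x}:G\{e\}\to G\{x\}$, $c\mapsto c+1_{x}$; using that $m$ preserves identities (so $1_{-x}+1_{x}=1_{-x+x}=1_{e}=e(\ast)$, the unit of the group $G$) one checks $\psi_{x}\circ\phi_{x}=\mathrm{id}$ and $\phi_{x}\circ\psi_{x}=\mathrm{id}$.

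For the smoothness part, $\phi_{x}$ is the restriction to the closed submanifold $G\{x\}$ of the map $m(\,\cdot\,,1_{-x})$, which is smooth because the multiplication of the Lie group $G$ is smooth; its image lies in the closed submanifold $G\{e\}$, so $\phi_{x}$ is smooth, and by the same argument $\psi_{x}$ is smooth. Being mutually inverse, $\phi_{x}$ and $\psi_{x}$ are diffeomorphisms. Thus $\phi_{x}$ is at once a group isomorphism and a diffeomorphism, that is, a linear diffeomorphism $G\{x\}\cong G\{e\}$, and then $\phi_{y}^{-1}\circ\phi_{x}:G\{x\}\to G\{y\}$ is a linear diffeomorphism for any $x,y\in G_{0}$.

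I do not expect a serious obstacle. The only genuinely non-routine observation is that the interchange law makes the translation $a\mapsto a+1_{-x}$ multiplicative even when $x$ and $e$ lie in different transitive components, so that, unlike in a bare Lie groupoid, one does not need a connecting arrow inside $G$; everything else is bookkeeping with the facts already recorded above, namely that $m$, $e$ and $\overline{u}$ are Lie groupoid morphisms and that vertex groups are closed submanifolds and Lie groups. The main thing to watch is not to conflate the groupoid identities $1_{x}$ with the group unit $1_{e}$ of $G$, and to check that $\phi_{x}$ and $\psi_{x}$ are two-sided, not merely one-sided, inverses.
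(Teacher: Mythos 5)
Your proposal is correct and follows essentially the same route as the paper: translation by an identity arrow ($a\mapsto a+1_{-x}$ versus the paper's $L_{1_{x}}:a\mapsto 1_{x}+a$), with the interchange law providing the homomorphism property and smoothness of the Lie group addition providing the diffeomorphism. You simply write the map in the opposite direction and spell out the well-definedness and two-sided inverse checks that the paper leaves implicit.
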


\begin{proof}
It is enough to show that for any $x\in G_{0}$ the object group
$G\{x\}$ is diffeomorphic to the vertex group $G\{e\}$. From the
definition of left translation, we can write
$L_{1_{x}}:G\{e\}\rightarrow G\{x\}$, $a\mapsto 1_{x}+a$. On the
other hand,
\begin{equation*}
L_{1_{x}}(b\circ a)=1_{x}+(b\circ a)=(1_{x}\circ 1_{x})+(b\circ
a)=(1_{x}+b)\circ (1_{x}+a)
\end{equation*}
by the interchange law. Therefore, $L_{1_{x}}$ is a homomorphism of
groups. Since the operations $+$ and $\circ $ are the operations of
Lie group and Lie groupoid, respectively, they are smooth. That is,
$L_{1_{x}}$ is smooth. Furthermore, there exists inverse of
$L_{1_{x}}$, and it is also smooth.
\end{proof}

A Lie group-groupoid $G$ is called transitive, $1$-transitive or
simply transitive, if the underlying Lie groupoid of $G$ is
transitive, $1$ -transitive or simply transitive, respectively.

\section{\protect\bigskip Coverings and Actions of Lie Group-Groupoids}

It is useful to present the definition of covering morphism of Lie
groupoids before the definition of covering morphism of Lie
group-groupoids.

\begin{definition}
Let $p:\widetilde{G}\rightarrow G$ be a morphism of Lie groupoids.
For each $\widetilde{x}\in \widetilde{G}_{0}$, if the restriction
$\widetilde{G}_{\widetilde{x}}\rightarrow G_{p(\widetilde{x})}$ of
$p$ is a diffeomorphism, $p$ is called the covering morphism of Lie
groupoids. Then $\widetilde{G}$ is called the covering of Lie
groupoid $G$.
\end{definition}

Let us give an equivalent criterion to the covering of Lie
groupoids.

Let $p:H \rightarrow G$ be a covering morphism of Lie groupoids.
Take the pullback
\begin{center}
$G~_{\alpha}\times_{p_{0}}H_{0}=\{(a,x)\in G\times H_{0}\mid
\alpha(a)=p_{0}(x)\}.$
\end{center}
Since $\alpha$ is a submersion, $G~_{\alpha}\times_{p_{0}}H_{0}$ is
a manifold. Then the map
$s_{p}:G~_{\alpha}\times_{p_{0}}H_{0}\rightarrow H$ is the lifting
function assigning the unique element $h\in H_{x}$ to the pair
$(a,x)$ such that $p(h)=a$. It is clear that $s_{p}$ is inverse of
the map $(p,\alpha):H\rightarrow G~_{\alpha}\times_{p_{0}}H_{0}$.

Thus, the morphism $p:H\rightarrow G$ is covering morphism of Lie
groupoids iff the morphism $(p,\alpha)$ is a diffeomorphism.

\begin{definition}
A morphism $ f:H\rightarrow G$ of Lie group-groupoids is called a
covering morphism of Lie group-groupoids if it is a covering
morphism of underlying Lie groupoids.
\end{definition}

\begin{proposition}\label{Onerme3.2.2}
Let $M$ be a connected Lie group. Then $\pi _{1}M$ is a Lie
group-groupoid.
\end{proposition}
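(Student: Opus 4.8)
The plan is to transport the Lie group structure of $M$ onto $\pi_{1}M$ through the universal cover. Recall first that for a connected manifold $M$ with universal covering $p\colon\widetilde{M}\rightarrow M$ and deck group $\Gamma=\pi_{1}(M,x_{0})$ one has $\pi_{1}M\cong(\widetilde{M}\times\widetilde{M})/\Gamma$ with $\Gamma$ acting diagonally: the two projections followed by $p$ give the source and target (submersions, being induced by projections), path composition is induced by $(\tilde{z},\tilde{y})\circ(\tilde{y},\tilde{x})\mapsto(\tilde{z},\tilde{x})$, and since $\Gamma$ acts freely and properly discontinuously on $\widetilde{M}\times\widetilde{M}$ this quotient is a Lie groupoid over $M\cong\widetilde{M}/\Gamma$ (so in particular $G_{0}$ and the $\alpha$-fibers, which are copies of $\widetilde{M}$, are Hausdorff). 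Because $M$ is a connected Lie group, by the fact quoted in the introduction $\widetilde{M}$ is itself a simply connected Lie group, $p$ is a Lie group homomorphism, and $\Gamma=\ker p$ is a discrete, hence central, subgroup of $\widetilde{M}$.

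Next I would define the addition on $\pi_{1}M$ by descending the coordinatewise multiplication of $\widetilde{M}\times\widetilde{M}$,
\begin{equation*}
[\tilde{x}_{1},\tilde{y}_{1}]+[\tilde{x}_{2},\tilde{y}_{2}]:=[\tilde{x}_{1}\tilde{x}_{2},\tilde{y}_{1}\tilde{y}_{2}],
\end{equation*}
with inverse $[\tilde{x},\tilde{y}]\mapsto[\tilde{x}^{-1},\tilde{y}^{-1}]$ and unit $[\tilde{e},\tilde{e}]$, where $\tilde{e}$ is the identity of $\widetilde{M}$. This is well defined because replacing the representatives by $(\gamma_{1}\tilde{x}_{1},\gamma_{1}\tilde{y}_{1})$ and $(\gamma_{2}\tilde{x}_{2},\gamma_{2}\tilde{y}_{2})$ yields $(\gamma_{1}\gamma_{2}\tilde{x}_{1}\tilde{x}_{2},\gamma_{1}\gamma_{2}\tilde{y}_{1}\tilde{y}_{2})$ by centrality of $\Gamma$, i.e.\ a $\Gamma$-translate of the original; it is smooth because it is induced, along the surjective submersion $\widetilde{M}\times\widetilde{M}\rightarrow\pi_{1}M$, by the smooth multiplication of the Lie group $\widetilde{M}\times\widetilde{M}$; and the group axioms for $(\pi_{1}M,+)$ pass to the quotient from those on $\widetilde{M}\times\widetilde{M}$. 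Hence $\pi_{1}M$ is a Lie group. (Equivalently one may define $[\sigma]+[\tau]$ as the homotopy class of the pointwise product $t\mapsto\sigma(t)\tau(t)$ of representative paths; the universal-cover picture is what makes smoothness transparent.)

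Finally I would check that $m$, $\overline{u}$ and $e$ are morphisms of Lie groupoids. Compatibility with source and target is immediate from $p(\tilde{x}_{1}\tilde{x}_{2})=p(\tilde{x}_{1})p(\tilde{x}_{2})$ and its analogue for $\tilde{y}$, while smoothness was noted above; the substantive point, which I expect to be the main obstacle, is compatibility with groupoid composition, that is, the interchange law
\begin{equation*}
(b\circ a)+(d\circ c)=(b+d)\circ(a+c).
\end{equation*}
Choosing representatives $a=[\tilde{y}_{1},\tilde{x}_{1}]$, $b=[\tilde{z}_{1},\tilde{y}_{1}]$, $c=[\tilde{y}_{2},\tilde{x}_{2}]$, $d=[\tilde{z}_{2},\tilde{y}_{2}]$ with the middle coordinates matched (always achievable after a diagonal $\Gamma$-shift), both sides equal $[\tilde{z}_{1}\tilde{z}_{2},\tilde{x}_{1}\tilde{x}_{2}]$, and the same computation shows that $\overline{u}$ and $e$ are groupoid morphisms as well. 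Therefore $\pi_{1}M$ is a Lie group-groupoid. The only real difficulty is the bookkeeping: one must be certain that the identification $\pi_{1}M\cong(\widetilde{M}\times\widetilde{M})/\Gamma$ is an isomorphism of Lie groupoids, not merely of abstract groupoids, so that the phrase ``descends from a smooth map along a surjective submersion'' legitimately yields the smoothness of $+$, $\overline{u}$ and $e$.
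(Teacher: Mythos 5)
Your proposal is correct, but it reaches the conclusion by a genuinely different route than the paper. The paper cites \cite{Brw1} for the algebraic fact that $\pi_{1}M$ is a group-groupoid and \cite{Danesh} for the Lie groupoid structure coming from an atlas of liftable charts, and then argues smoothness of $\pi_{1}m$ by lifting a local coordinate diagram through the covering $\pi_{1}M\rightarrow M\times M$. You instead work with the global model $\pi_{1}M\cong(\widetilde{M}\times\widetilde{M})/\Gamma$, use that $\widetilde{M}$ is a simply connected Lie group with $p$ a homomorphism and $\Gamma=\ker p$ discrete hence central, descend the coordinatewise multiplication of $\widetilde{M}\times\widetilde{M}$ along the quotient submersion, and verify the interchange law by a direct computation with matched representatives. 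Your approach buys an explicit, essentially self-contained construction: it reproves the algebraic group-groupoid structure rather than citing it, makes the smoothness of $m$, $\overline{u}$ and $e$ an immediate consequence of the quotient-submersion criterion rather than of a chart computation (the paper's identity $m=\varphi^{-1}\circ pr_{1}\circ(\varphi\times\varphi)$ is at best a local heuristic), and shows precisely where the Lie group hypothesis enters, namely through the centrality of $\ker p$, which is what makes the descended operation well defined. The paper's approach buys brevity by citation and avoids identifying the deck group with $\ker p$. The one point you flag yourself---that $(\widetilde{M}\times\widetilde{M})/\Gamma\cong\pi_{1}M$ as Lie groupoids and not merely as abstract groupoids, so that the quotient smooth structure coincides with the lifted-chart structure---is standard (both restrict on each fibre $St_{\pi_{1}M}x$ to the canonical smooth structure of $\widetilde{M}$), so I see no gap.
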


\begin{proof}
From \cite{Brw1}, it is known that $\pi _{1}M$ is a group-groupoid.
Let us denote the atlas making the manifold $M$ smooth and
consisting of the liftable charts by $\mathcal{A}$. Since $M$ is
connected manifold, the fundamental groupoid $\pi _{1}M$ is a Lie
groupoid \cite{Danesh}. So there exists an atlas $\mathcal{A}$
lifted from $\mathcal{A}$ over $\pi _{1}M$. Further this atlas makes
it smooth manifold. Now we can show that $\pi _{1}M$ is a Lie
group-groupoid. For this, it is enough to show that the group
operation $\pi _{1}m:\pi _{1}M\times \pi _{1}M\rightarrow \pi _{1}M$
is smooth. Since $m:M\times M\rightarrow M$ is the addition of Lie
group, it is smooth. In fact, it is easily seen by the following
diagram.
\[
\xymatrix{M\times M \ar[rr]^{m} \ar[dd]_{\varphi \times \varphi }&&M\ar[dd]^{\varphi}\\\\
{\mathbb{R}^{2n}}\ar[rr]^{pr_{1}}&& \mathbb{R}^{n}}
\]
At this diagram, $\varphi $ is a coordinat chart with domain $U$
selected from the liftable atlas of $M$. So $\varphi $ is a
diffeomorphism onto the open subset $\varphi (U)\subset
\mathbb{R}^{n}$. From here, $\varphi \times \varphi $ is also
smooth. Furthermore, $pr_{1}$ is smooth, because it is a projection
onto the first factor. Thus, $m=\varphi ^{-1}\circ pr_{1}\circ
(\varphi \times \varphi )$. That is, $m$ is smooth map. Since $\pi
_{1}M$ is a Lie groupoid over $M$, and it is the covering manifold
of the product manifold $M\times M$, then we can give the following
diagram:
\[
\xymatrix{{\pi_{1}(M)\times \pi_{1}(M)} \ar[rr]^{\pi_{1}m} \ar[dd]_{\widetilde{\varphi} \times \widetilde{\varphi} }&&{\pi_{1}M}\ar[dd]^{\widetilde{\varphi}}\\\\
{\mathbb{R}^{4n}}\ar[rr]^{pr_{1,2}}&& \mathbb{R}^{2n}}
\]
,which is lifting of the above diagram. $\widetilde{\varphi }$ is
the coordinat chart lifted from the coordinat chart $\varphi $. So
$\widetilde{\varphi }$ is smooth. This brings the map
$\widetilde{\varphi }\times \widetilde{\varphi }$ is smooth too.
$pr_{1,2}$ is also smooth, because it is a projection. Hence $\pi
_{1}m$ is smooth. Therefore, $\pi _{1}M$ is a Lie group-groupoid.
\end{proof}

From the following proposition, Lie group-groupoid $\pi _{1}G$ is
functorial.

\begin{proposition}\label{onerme3.2.4}
Let $f:H\rightarrow G$ be a morphism of connected Lie groups. A
morphism $\pi _{1}f:\pi _{1}H\rightarrow \pi _{1}G$ induced from $f$
is a morphism of Lie group-groupoids.
\end{proposition}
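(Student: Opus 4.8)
The plan is to verify that the pair $\pi_1 f = ((\pi_1 f)_1, (\pi_1 f)_0)$ — where $(\pi_1 f)_0 = f$ on objects and $(\pi_1 f)_1$ sends the homotopy class of a path $\gamma$ in $H$ to the class of the path $f\circ\gamma$ in $G$ — simultaneously respects the Lie groupoid structure and the Lie group structure, and is smooth. Most of this is already available: since $f$ is continuous (indeed smooth), $\pi_1 f$ is a well-defined morphism of the underlying topological groupoids by the classical functoriality of $\pi_1$, and since $H$ and $G$ are connected Lie groups, Proposition~\ref{Onerme3.2.2} tells us that $\pi_1 H$ and $\pi_1 G$ are Lie group-groupoids. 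So the content reduces to three checks: (i) $\pi_1 f$ is smooth; (ii) $\pi_1 f$ preserves the groupoid composition (already classical); (iii) $\pi_1 f$ preserves the group addition, i.e. $\pi_1 f(\langle\gamma\rangle + \langle\delta\rangle) = \pi_1 f(\langle\gamma\rangle) + \pi_1 f(\langle\delta\rangle)$.

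For step (iii) I would argue as follows. The addition on $\pi_1 H$ is $\pi_1 m_H$, where $m_H : H\times H \to H$ is the group multiplication, under the canonical identification $\pi_1(H\times H)\cong \pi_1 H\times \pi_1 H$; similarly for $G$. Since $f$ is a group homomorphism, $f\circ m_H = m_G\circ (f\times f)$ as maps $H\times H\to G$. Applying the fundamental groupoid functor to this equality of continuous maps and using naturality of the isomorphism $\pi_1(X\times Y)\cong \pi_1 X\times\pi_1 Y$, one gets $\pi_1 f\circ \pi_1 m_H = \pi_1 m_G\circ(\pi_1 f\times \pi_1 f)$, which is exactly the statement that $\pi_1 f$ is additive. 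Equivalently, at the level of elements: $\langle\gamma\rangle+\langle\delta\rangle = \langle t\mapsto m_H(\gamma(t),\delta(t))\rangle$, and applying $\pi_1 f$ gives $\langle t\mapsto f(m_H(\gamma(t),\delta(t)))\rangle = \langle t\mapsto m_G(f\gamma(t),f\delta(t))\rangle = \pi_1 f(\langle\gamma\rangle)+\pi_1 f(\langle\delta\rangle)$.

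For the smoothness in step (i), I would use the same mechanism as in the proof of Proposition~\ref{Onerme3.2.2}: the smooth structure on $\pi_1 H$ (resp. $\pi_1 G$) is obtained by lifting a liftable atlas of $H$ (resp. $G$) along the covering projection $\pi_1 H\to H\times H$ (resp. $\pi_1 G\to G\times G$). Since $f$ is smooth, the induced map on products $f\times f : H\times H\to G\times G$ is smooth, and the covering projections intertwine $\pi_1 f$ with $f\times f$; reading this through the lifted charts $\widetilde\varphi$, the coordinate expression of $\pi_1 f$ coincides with that of $f\times f$, which is smooth. Hence $\pi_1 f$ is smooth as a map of manifolds, and $(\pi_1 f)_0 = f$ is smooth by hypothesis.

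The only genuinely delicate point is the coherence between the two structures — i.e.\ that the groupoid-morphism property and the group-homomorphism property of $\pi_1 f$ are compatible with the interchange law, so that $\pi_1 f$ is honestly a morphism in $LGGd$ rather than merely a map respecting each structure separately. This is automatic once one observes that the interchange law in $\pi_1 H$ and $\pi_1 G$ is inherited from $\pi_1$ applied to the single continuous map $m_H$ (resp. $m_G$), which is simultaneously a groupoid morphism $\pi_1 H\times\pi_1 H\to \pi_1 H$; functoriality of $\pi_1$ then transports the identity $f\circ m_H = m_G\circ(f\times f)$ through all the structure at once. So I expect the main obstacle to be purely bookkeeping: carefully matching the identification $\pi_1(X\times Y)\cong\pi_1 X\times\pi_1 Y$ with the definitions of addition used in Proposition~\ref{Onerme3.2.2}, and confirming that the lifted atlas used there is functorial enough that $\pi_1 f$ reads off smoothly in those charts.
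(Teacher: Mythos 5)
Your proposal is correct and takes essentially the same approach as the paper: the algebraic content is reduced to the known functoriality of $\pi_{1}$, and smoothness of $\pi_{1}f$ is obtained by reading it through the lifted atlases on $\pi_{1}H$ and $\pi_{1}G$, where its local expression agrees with that of the smooth map $f$ (equivalently $f\times f$ under the covering projections). The only difference is that you verify additivity directly from $f\circ m_{H}=m_{G}\circ(f\times f)$, whereas the paper simply cites Mucuk's result that $\pi_{1}f$ is a morphism of group-groupoids; your version is self-contained but substantively identical.
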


\begin{proof}
It was proved that $\pi _{1}f$ is the morphism of group-groupoids in
\cite{Mck1}. For this reason, it is enough to show that $\pi _{1}f$
is smooth. Since $f:H\rightarrow G$ is smooth, we have $f(U)\subset
V$ for the charts $(U,\varphi )$ and $(V,\psi )$\ on $H$ and $G$,
respectively. And hence $\psi \circ f\circ \varphi ^{-1}:\varphi
(U)\rightarrow \psi (V)$ is a smooth map.
\[
\xymatrix{H \ar[rr]^{f} \ar[dd]_{\varphi }&&G\ar[dd]^{\psi}\\\\
{\mathbb{R}^{n}}\ar[rr]^{I}&& \mathbb{R}^{n}}
\]
There exist the fundamental Lie groupoids $\pi _{1}H$ and $\pi
_{1}G$ corresponding to $H$ and $G$, respectively. So we can define
groupoid morphism $\pi _{1}f$ as $\widetilde{\psi }^{-1}\circ
Id\circ \widetilde{\varphi }$ by coordinat charts
$(\widetilde{U},\widetilde{\varphi })$ and
$(\widetilde{V},\widetilde{\psi })$ which are liftings of the
coordinat charts $(U,\varphi )$ and$(V,\psi )$, respectively.
$\widetilde{\psi }$ and $\widetilde{\varphi }$ are smooth, because
they are the chart maps of Lie groupoids $\pi _{1}G$ and $\pi
_{1}H$, respectively. Since the $Id$ is the unit map, it is smooth.
Thus $\pi _{1}f$ is smooth.
\end{proof}

\begin{proposition}
Let $p:\widetilde{M}\rightarrow M$ be a covering morphism of
connected Lie groups. Then the morphism $\pi _{1}p:\pi
_{1}\widetilde{M}\rightarrow \pi _{1}M$ is a covering morphism of
Lie group-groupoids.
\end{proposition}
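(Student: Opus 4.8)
The plan is to verify the two properties that make $\pi_1 p$ a covering morphism of Lie group-groupoids, namely (i) it is a morphism of Lie group-groupoids, and (ii) it is a covering morphism of the underlying Lie groupoids. For (i), note that a covering morphism $p:\widetilde{M}\rightarrow M$ of connected Lie groups is in particular a morphism of connected Lie groups, so Proposition~\ref{onerme3.2.4} applies directly and gives that $\pi_1 p:\pi_1\widetilde{M}\rightarrow\pi_1 M$ is a morphism of Lie group-groupoids. (Here one uses that $\widetilde{M}$ is connected so that $\pi_1\widetilde{M}$ is defined and, by Proposition~\ref{Onerme3.2.2}, is itself a Lie group-groupoid.) So the genuine content is (ii).

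First I would recall the known topological/algebraic fact (from the work of Brown--Higgins, cf. \cite{Brw1,Brw2}) that for an ordinary covering map $p:\widetilde{M}\rightarrow M$ of connected spaces the induced morphism $\pi_1 p:\pi_1\widetilde{M}\rightarrow\pi_1 M$ is a covering morphism of groupoids: for each $\widetilde{x}\in\widetilde{M}$ the restriction $\mathrm{St}_{\pi_1\widetilde{M}}\widetilde{x}\rightarrow \mathrm{St}_{\pi_1 M}p(\widetilde{x})$ is a bijection, this being exactly the unique path-lifting property of the covering $p$. The only thing left to upgrade to the smooth setting is that this star-restriction is not merely a bijection but a diffeomorphism of the corresponding star manifolds $(\pi_1\widetilde{M})_{\widetilde{x}}\rightarrow (\pi_1 M)_{p(\widetilde{x})}$.

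To see the diffeomorphism claim I would use the lifted-atlas description already employed in the proof of Proposition~\ref{Onerme3.2.2}: the smooth structure on $\pi_1 M$ is obtained by lifting a liftable atlas $\mathcal{A}$ of $M$, and likewise for $\pi_1\widetilde{M}$ from a (liftable) atlas of $\widetilde{M}$; since $p$ is a local diffeomorphism we may choose the atlas on $\widetilde{M}$ compatibly with $\mathcal{A}$ so that in these charts $p$ is locally the identity, whence the lifted morphism $\pi_1 p=\widetilde{\psi}^{-1}\circ Id\circ\widetilde{\varphi}$ is, in the lifted coordinate charts, locally the identity as well. Restricting to a star, the star manifold $(\pi_1 M)_x$ inherits its smooth structure from $\beta$ being a submersion, and the same local-identity description shows $(\pi_1 p)_{\widetilde{x}}$ is a local diffeomorphism; combined with the bijectivity from unique path lifting, it is a global diffeomorphism. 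Equivalently, one can invoke the criterion stated just before the definition of covering morphism of Lie group-groupoids: $\pi_1 p$ is a covering morphism iff $(\pi_1 p,\alpha):\pi_1\widetilde{M}\rightarrow (\pi_1 M)\,_{\alpha}\!\times_{p_0}(\pi_1\widetilde{M})_0$ is a diffeomorphism, and one checks this map is a smooth bijection with smooth inverse given by the lifting function, the smoothness following again from the compatible lifted charts.

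The main obstacle I expect is the bookkeeping around smooth structures: making sure the atlas on $\widetilde{M}$ can be chosen subordinate to the liftable atlas $\mathcal{A}$ of $M$ so that $p$ and hence $\pi_1 p$ have a clean local-identity form, and checking that the star manifold structure (coming from $\beta$ being a submersion) is the one seen in these charts. Once that is set up, the diffeomorphism on each star is immediate from the classical unique path-lifting bijection plus the local-identity form, and the proposition follows.
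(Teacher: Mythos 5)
Your proposal is correct and follows the same overall skeleton as the paper: both reduce the statement to (i) $\pi_1 p$ being a morphism of Lie group-groupoids via Proposition~\ref{onerme3.2.4} (with Proposition~\ref{Onerme3.2.2} supplying the Lie group-groupoid structures), and (ii) the classical fact that $\pi_1 p$ is a covering morphism of the underlying groupoids, so that only the smoothness upgrade carries real content. Where you diverge is in how that upgrade is carried out. The paper works with the global criterion that $(\pi_1 p,\alpha):\pi_1\widetilde{M}\rightarrow \pi_1 M\,_{\alpha}\!\times_{p_0}\widetilde{M}$ be a diffeomorphism: it observes this map is a smooth bijection and then exhibits its inverse, the lifting function $s_{\pi_1 p}$, as a composite $pr_2\circ(I\times L_{\widetilde{a}})\circ(I\times\epsilon)$ of smooth maps (object inclusion, left translation, projection). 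You instead verify the definition star by star, using the lifted-atlas description from Proposition~\ref{Onerme3.2.2} to put $\pi_1 p$ in local-identity form and concluding that each star restriction is a bijective local diffeomorphism, hence a diffeomorphism; you mention the $(\pi_1 p,\alpha)$ criterion only as an equivalent reformulation. Your chart-level argument is more elementary and arguably more careful about where the smooth structures on $\pi_1\widetilde{M}$ and on the stars come from, while the paper's factorization of $s_{\pi_1 p}$ is shorter but leans on the left-translation diffeomorphisms and leaves the dependence of $L_{\widetilde{a}}$ on the input point implicit. Both arguments rest on the same unique path-lifting bijectivity, and your version closes the proposition just as well.
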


\begin{proof}
Let $p:\widetilde{M}\rightarrow M$ be covering morphism of the
connected Lie groups. It is obvious that $p$ is smooth and a Lie
group homomorphism. Since $\widetilde{M}$ and $M$ are connected
smooth manifolds, from Proposition \ref{Onerme3.2.2}, $\pi
_{1}\widetilde{M}$ and $\pi _{1}M$ are Lie group-groupoids. It is
known that $\pi _{1}p:\pi _{1}\widetilde{M}\rightarrow \pi _{1}M$ is
covering morphism of groupoids \cite{Brw2}. Also by Proposition
\ref{onerme3.2.4}, $\pi _{1}p$ is the morphism of Lie
group-groupoids. Now let us show that $\pi _{1}p$ is covering
morphism of Lie group-groupoids. $\pi _{1}p $ is smooth, because it
is the morphism of Lie group-groupoids. Since $\alpha $ is the
source map of Lie group-groupoid, it is obvious that it is smooth
too. So the map $(\pi _{1}p,\alpha ):\pi
_{1}\widetilde{M}\rightarrow \pi _{1}M~_{\alpha }\times
_{p}\widetilde{M\text{ }}$is smooth. Further, it is bijection,
because $\pi _{1}p$ is the covering morphism of group-groupoids.
Hence there exists an inverse $s_{\pi _{1}p}:\pi _{1}M~_{\alpha
}\times _{p}\widetilde{M\text{ }}\rightarrow \pi _{1}\widetilde{M}$
of $(\pi _{1}p,\alpha )$. $s_{\pi _{1}p}$ is the function assigning
to the unique homotopy class $[h]_{\widetilde{x}}$ started at
$\widetilde{x}$ of smooth paths $h$ each pair $([a],\widetilde{x})$
such that $\pi _{1}p([h])=[a]$. By the homotopy lifting property and
the unique lifting property, it is obvious that $s_{\pi _{1}p}$ is
well-defined. Also, we can write $s_{\pi _{1}p}$ as the composition
of the smooth maps similar to the following diagram
\[
\xymatrix{\pi_{1}M~_{\alpha}\times
_{p}\tilde{M} \ar[r]^{I\times \epsilon} &{\pi_{1}M\times \pi_{1}\widetilde{M}} \ar[r]^{I\times L_{\widetilde{a}}} &{\pi_{1}M\times \pi_{1}\widetilde{M}} \ar[rr]^{pr_{2}} &&\pi_{1}\widetilde{M}\\
{([a],\widetilde{x})} \ar@{|->}[r]^{} & {([a],[1_{\widetilde{x}}])}
\ar@{|->}[r]^{} &{([a],[\widetilde{a}])} \ar@{|->}[r]^{}
&{[\widetilde{a}]}}
\]
Hence $s_{\pi _{1}p}$ is smooth. Thus, $(\pi _{1}p,\alpha )$ is a
diffeomorphism. Consequently, $\pi _{1}p$ is the covering morphism
of Lie group-groupoids.
\end{proof}

If $M$ is connected Lie group, by Proposition \ref{Onerme3.2.2} the
fundamental groupoid $\pi _{1}M$ is a Lie group-groupoid. Thus, we
obtain a category $LGGdCov(\pi _{1}M)$. Objects of this category are
covering morphisms $p:\widetilde{G}\rightarrow \pi _{1}M$ of Lie
group-groupoids, and a morphism from an object
$p:\widetilde{G}\rightarrow \pi _{1}M$ to an object
$p:\widetilde{H}\rightarrow \pi _{1}M$ is a morphism
$r:\widetilde{G}\rightarrow \widetilde{H}$ of Lie group-groupoids
such that $p=q\circ r$, where $\widetilde{M}=\widetilde{G}_{0}$ is
connected manifold.

More generally; let $G$ be a Lie group-groupoid. Then we obtain a
category $LGGdCov(G)$ whose objects are covering morphisms
$p:H\rightarrow G$ of Lie group-groupoids. In this category, a
morphism from an object $p:H\rightarrow G$ to an object
$q:K\rightarrow G$ is a morphism $r:H\rightarrow K$ of Lie
group-groupoids satisfying the condition $p=q\circ r$.

Let $\widetilde{M}$ and $M$ be connected Lie groups. Then we obtain
a category $LGCov(M)$. Its objects are covering morphisms
$p:\widetilde{M}\rightarrow M$ of Lie groups. A morphism from an
object $p:\widetilde{M}\rightarrow M$ to an object
$q:\widetilde{N}\rightarrow M$ is a morphism
$r:\widetilde{M}\rightarrow \widetilde{N}$ of Lie groups such that
$p=q\circ r $.

Let us now state a proposition from \cite{MHG} to be necessary for
the proof of the following theorem.

\begin{proposition}\label{exist of smooth manifold}
Let $M$ be a connected manifold and let $q:\widetilde{G}\rightarrow
\pi_{1}M$ be covering morphism of groupoids. Let
$\widetilde{M}=\widetilde{G}_{0}$ and $p=q_{0}:\widetilde{M}
\rightarrow M$. Let $\mathcal{A}$ denotes an atlas consisting of the
liftable charts. Then the smooth structure over $\widetilde{M}$ is
the unique structure such that the followings are hold:
\begin{enumerate}
\item $p:\widetilde{M} \rightarrow M$ is a covering map.

\item There exists an isomorphism
$r:\tilde{G}\rightarrow\pi_{1}\tilde{M}$ which is the identical on
objects such that the following diagram is commutative:
\end{enumerate}
\[
\xymatrix{& {\pi_{1}\widetilde{M}} \ar[d]^{p} \\ \tilde G
\ar[ur]^{r} \ar[r]_{q} & \pi_{1}M}
\]

\end{proposition}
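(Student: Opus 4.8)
\emph{Proof proposal.} The route is the classical Brown construction (of a covering space from a covering morphism of fundamental groupoids), enhanced with the smoothness bookkeeping, followed by a uniqueness argument; it splits into three steps: (i) build a topology and then a smooth structure on the set $\widetilde M=\widetilde G_{0}$ out of $q$ and the liftable atlas $\mathcal A$, and check that $p=q_{0}$ is a smooth covering map; (ii) exhibit the comparison isomorphism $r$ and verify the triangle; (iii) prove uniqueness of the smooth structure. The delicate part will be steps (i)--(ii): making the topology on $\widetilde M$ genuinely well defined with $p$ a covering, and showing that lifting of paths along $p$ reproduces star transport in $\widetilde G$ --- the latter requiring a subdivision argument for paths not contained in a single liftable chart. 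Once that topological core is secured, the smoothness additions and the uniqueness argument are routine.

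\emph{Step (i).} Each liftable chart $(U,\varphi)\in\mathcal A$ has $U$ path-connected with $\pi_{1}(U,x)\to\pi_{1}(M,x)$ trivial for every $x\in U$. Fixing $x_{0}\in U$ and $\widetilde x_{0}\in p^{-1}(x_{0})$, since $q$ is a bijection on stars every $[a]\in\pi_{1}M$ with source $x_{0}$ lifts to a unique $\widetilde a\in\widetilde G$ with source $\widetilde x_{0}$; write $\widetilde x_{0}\cdot[a]=\beta(\widetilde a)$. Put $\widetilde U_{\widetilde x_{0}}=\{\,\widetilde x_{0}\cdot[a]\mid a\ \text{a path in }U,\ a(0)=x_{0}\,\}$; triviality of $\pi_{1}(U)\to\pi_{1}(M)$ makes $\widetilde x_{0}\cdot[a]$ depend only on $a(1)$, so $p$ restricts to a bijection $\widetilde U_{\widetilde x_{0}}\to U$. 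Declaring all such sets open and all such restrictions of $p$ homeomorphisms gives a well-defined topology (on an overlap of two basic sets $p$ is a common homeomorphism onto an open piece of the respective $U\cap V$, so the induced topologies agree) for which $p$ is a covering map, as in \cite{Brw2}. Taking as charts the composites $\varphi\circ p|_{\widetilde U_{\widetilde x_{0}}}\colon\widetilde U_{\widetilde x_{0}}\to\varphi(U)\subseteq\mathbb R^{n}$, a transition map is a restriction of $\varphi\circ\psi^{-1}$, hence smooth; thus $\widetilde M$ is a smooth manifold (Hausdorff, and second countable since $M$ is), $p$ is a local diffeomorphism and therefore a smooth covering map, which is (1).

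\emph{Step (ii).} Set $r=\mathrm{id}$ on objects, and for $\widetilde a\colon\widetilde x\to\widetilde y$ in $\widetilde G$ let $[a]=q(\widetilde a)$, lift the path $a$ through $p$ to the path $\widehat a$ in $\widetilde M$ with $\widehat a(0)=\widetilde x$, and set $r(\widetilde a)=[\widehat a]\in\pi_{1}\widetilde M$. The key point is $\widehat a(1)=\widetilde y$, i.e.\ that $p$-lifting of paths coincides with $\widetilde G$-star transport: this is immediate from the definition of $\widetilde U_{\widetilde x}$ when $a$ lies in one liftable chart, and for general $a$ one subdivides $a=a_{n}\ast\cdots\ast a_{1}$ into pieces lying in liftable charts and uses that both operations respect concatenation together with uniqueness of lifts. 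Then $r$ is well defined (homotopies rel end points lift), is a groupoid morphism (the lift of a concatenation is the concatenation of the lifts), satisfies $(\pi_{1}p)\circ r=q$ because $(\pi_{1}p)[\widehat a]=[p\circ\widehat a]=[a]$, and is the identity on objects; moreover $r$ is bijective on each star, since $(\pi_{1}p)\circ r=q$ with $\pi_{1}p$ and $q$ both covering morphisms forces injectivity, while surjectivity follows by lifting a given $[\widehat b]\in\mathrm{St}_{\pi_{1}\widetilde M}\widetilde x$ down to $\widetilde G$ along $q$ and back along $\pi_{1}p$. Hence $r$ is an isomorphism of groupoids (identical on objects, compatible with $q$), giving (2).

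\emph{Step (iii).} Let $\mathcal S$ be any smooth structure on $\widetilde M$ satisfying (1) and (2) with associated isomorphism $r_{\mathcal S}$. Given a liftable chart $(U,\varphi)$ and $\widetilde x_{0}\in p^{-1}(x_{0})$, $x_{0}\in U$, let $V$ be the unique connected $\mathcal S$-open set containing $\widetilde x_{0}$ with $p|_{V}\colon V\to U$ a diffeomorphism (it exists because $p$ is a covering for $\mathcal S$). For $\widetilde y\in V$ and a path $a$ in $U$ from $x_{0}$ to $p(\widetilde y)$, its lift into $V$ is $\widehat a$, and $r_{\mathcal S}^{-1}[\widehat a]$ is then the unique arrow of $\widetilde G$ with source $\widetilde x_{0}$ over $[a]$, whose target equals both $\widetilde x_{0}\cdot[a]$ and (as $r_{\mathcal S}$ is the identity on objects) $\widehat a(1)=\widetilde y$; so $\widetilde y=\widetilde x_{0}\cdot[a]\in\widetilde U_{\widetilde x_{0}}$, and the same computation run backwards gives the reverse inclusion. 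Hence $V=\widetilde U_{\widetilde x_{0}}$ as subsets of $\widetilde M$, carrying the $\mathcal S$-chart $\varphi\circ p|_{V}=\varphi\circ p|_{\widetilde U_{\widetilde x_{0}}}$. As $(U,\varphi)$ ranges over $\mathcal A$ and $\widetilde x_{0}$ over the fibres of $p$, these sets cover $\widetilde M$, so $\mathcal S$ admits the very atlas constructed in step (i); therefore $\mathcal S$ is that structure, which proves uniqueness.
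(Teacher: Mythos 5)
The paper does not actually prove this proposition: it is stated as imported from the reference \cite{MHG} (``Covering of Lie Groupoids'', submitted), so there is no in-paper argument to compare yours against. On its own terms, your proof is correct and is the expected one --- it is Brown's classical construction (cf.\ 10.5.5 of \emph{Topology and Groupoids} and the Brown--Danesh-Naruie--Hardy paper cited here as \cite{Brw1}) of a covering space from a covering morphism of the fundamental groupoid, upgraded from ``topology'' to ``smooth structure'' by observing that the transition maps of the lifted atlas are restrictions of the transition maps of $\mathcal{A}$. The three steps are the right decomposition, and the two points you flag as delicate really are the only delicate ones: (a) that the sets $\widetilde U_{\widetilde x_{0}}$ form a base and that $p$ is then a covering (your parenthetical overlap argument is terse but standard --- one shrinks to a liftable $W\subseteq U\cap V$ and uses $\widetilde W_{\widetilde y}$); and (b) that $p$-lifting of paths agrees with star transport in $\widetilde G$, which your subdivision argument handles. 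In step (iii) you implicitly use that a liftable open set is automatically evenly covered by any covering map (so that the sheet $V$ through $\widetilde x_{0}$ exists and is unique); that is a true and standard fact, but it deserves one sentence, since ``$p$ is a covering'' only guarantees even covering of \emph{some} neighbourhood of each point. With that sentence added, the uniqueness argument --- identifying each $\mathcal{S}$-sheet with $\widetilde U_{\widetilde x_{0}}$ via the commuting triangle and the fact that $r_{\mathcal S}$ is the identity on objects --- is complete, and it is the part genuinely specific to the smooth setting. In short: the proposal supplies a sound proof of a result the paper leaves unproved.
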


Let us now give first main result of this paper.

\begin{theorem}
Let $M$ be a connected Lie group. Then the category $LGCov(M)$ of
the smooth coverings of Lie group $M$ is equivalent to the category
$LGGdCov(\pi_{1}M)$ of the coverings of Lie group-groupoid
$\pi_{1}M$.
\end{theorem}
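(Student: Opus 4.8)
The plan is to construct an explicit equivalence of categories by exhibiting functors in both directions and natural isomorphisms witnessing that the composites are isomorphic to the identities. In one direction, define $\Phi : LGCov(M) \to LGGdCov(\pi_1 M)$ by sending a smooth covering $p : \widetilde{M} \to M$ of Lie groups to the covering morphism $\pi_1 p : \pi_1 \widetilde{M} \to \pi_1 M$ of Lie group-groupoids. This is well-defined by the proposition stating that $\pi_1 p$ is a covering morphism of Lie group-groupoids, and it is functorial because $\pi_1$ is a functor (Proposition \ref{onerme3.2.4}); on a morphism $r : \widetilde{M} \to \widetilde{N}$ with $p = q \circ r$ one sets $\Phi(r) = \pi_1 r$, and then $\pi_1 p = \pi_1 q \circ \pi_1 r$ follows from functoriality.

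In the reverse direction, define $\Psi : LGGdCov(\pi_1 M) \to LGCov(M)$ by sending a covering morphism $q : \widetilde{G} \to \pi_1 M$ of Lie group-groupoids to the map $p = q_0 : \widetilde{M} \to M$ on objects, where $\widetilde{M} = \widetilde{G}_0$. The key input here is Proposition \ref{exist of smooth manifold}, which endows $\widetilde{M}$ with a unique smooth structure making $p$ a covering map; I would then need to check that this $p$ is moreover a Lie group homomorphism, i.e.\ that the group structure on the object manifold of a Lie group-groupoid covering $\pi_1 M$ is compatible with that of $M$ via $q_0$. This uses that $q$ preserves the group structure of the Lie group-groupoid and that the group operations on $\widetilde{G}_0$ are smooth with respect to the lifted atlas. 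On morphisms, a morphism $r : \widetilde{G} \to \widetilde{H}$ of Lie group-groupoids over $\pi_1 M$ restricts to $r_0 : \widetilde{G}_0 \to \widetilde{H}_0$, which is a Lie group morphism and satisfies the required triangle on objects.

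Next I would verify that $\Psi \circ \Phi \cong \mathrm{Id}_{LGCov(M)}$ and $\Phi \circ \Psi \cong \mathrm{Id}_{LGGdCov(\pi_1 M)}$. The first is essentially trivial: $\Psi(\Phi(p)) = \Psi(\pi_1 p) = (\pi_1 p)_0 = p$, since the object map of $\pi_1 p$ is $p$ itself and the object manifold of $\pi_1 \widetilde{M}$ is $\widetilde{M}$ with its original smooth structure (uniqueness in Proposition \ref{exist of smooth manifold} guarantees no ambiguity). The second is the substantive one: given $q : \widetilde{G} \to \pi_1 M$, part (2) of Proposition \ref{exist of smooth manifold} supplies an isomorphism $r : \widetilde{G} \to \pi_1 \widetilde{M} = \Phi(\Psi(q))$ which is the identity on objects and makes the triangle over $\pi_1 M$ commute; I would check that this $r$ also preserves the Lie group structure (again because $r$ is built from the lifting data of a Lie group-groupoid morphism, so it intertwines the group operations on arrows as well), hence is an isomorphism in $LGGdCov(\pi_1 M)$, and that the family $\{r\}$ is natural in $q$.

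The main obstacle will be the compatibility of the group structures throughout: showing that $q_0 : \widetilde{G}_0 \to M$ is a Lie group homomorphism (not merely a smooth covering map), and that the isomorphism $r$ from Proposition \ref{exist of smooth manifold}, which is a priori only an isomorphism of Lie groupoids over $\pi_1 M$, actually respects the addition, i.e.\ is an isomorphism of Lie group-groupoids. Both reduce to tracking how the interchange law and the fact that the group structure maps are groupoid morphisms force the object-level and arrow-level group operations to be determined by those of $M$; once this bookkeeping is in place, smoothness of all the relevant maps follows from the lifted-atlas arguments already used in Propositions \ref{Onerme3.2.2} and \ref{onerme3.2.4}, and the categorical equivalence is immediate from the two natural isomorphisms.
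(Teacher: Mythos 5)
Your proposal follows essentially the same route as the paper: the functor $p\mapsto\pi_1 p$ in one direction, the functor $q\mapsto q_0$ in the other (built on Proposition \ref{exist of smooth manifold}), and the natural isomorphism $\Gamma\Phi\simeq 1$ supplied by the isomorphism $r:\widetilde{G}\to\pi_1\widetilde{M}$ from that proposition. The compatibility issues you flag (that $q_0$ is a Lie group homomorphism and that $r$ respects the addition) are exactly the points the paper treats, the first explicitly and the second only implicitly, so your plan is sound and, if anything, slightly more careful.
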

\begin{proof}
Let us define a functor $\Gamma :LGCov(M)\rightarrow LGGdCov(\pi
_{1}M)$ as follows. Let $M$, $\widetilde{M}$ be connected Lie
groups. By Proposition \ref{onerme3.2.4}, if
$p:\widetilde{M}\rightarrow M$ is a covering morphism of Lie groups,
then $\pi _{1}p:\pi _{1}\widetilde{M}\rightarrow \pi _{1}M$ is a
covering morphism of Lie group-groupoids. Hence $\Gamma (p)=\pi
_{1}p$ is a covering morphism of Lie group-groupoids. If
$r:\widetilde{M}\rightarrow \widetilde{N}$ is a morphism of covering
morphisms of Lie groups from $p:\widetilde{M}\rightarrow M$ to
$q:\widetilde{N}\rightarrow M$, by the definition of category
$LGCov(M)$, $r$ is also covering morphism of Lie groups, and since
$\widetilde{M}$, $\widetilde{N}$ are connected, $\pi _{1}r$ is also
covering morphism of Lie group-groupoids. Obviously $\Gamma (r)$ is
a morphism of covering morphisms of Lie group-groupoids from $\pi
_{1}p$ to $\pi _{1}q$. Let $r^{^{\prime }}:\widetilde{N}\rightarrow
\widetilde{P}$ be another morphism of the covering morphisms of Lie
groups, where $q^{^{\prime }}:\widetilde{P}\rightarrow M$. Since $r$
and $r^{^{\prime }}$ are covering morphisms of Lie groups, the
composition $r^{^{\prime }}\circ r:\widetilde{M}\rightarrow
\widetilde{P}$ is also covering morphism of Lie groups and is
clearly morphism of covering morphisms of Lie groups from $p$ to
$q^{^{\prime }}$. Also, $\pi _{1}(r^{^{\prime }}\circ r)=\pi
_{1}r^{^{\prime }}\circ \pi _{1}r$ is the covering morphism of Lie
group-groupoids, because $\widetilde{M}$ and $\widetilde{P}$ are
connected. Furthermore, it is a morphism of covering morphisms of
Lie group-groupoids from $\pi _{1}p$ to $\pi _{1}q^{^{\prime }}$.
Thus, we have $\Gamma (r^{^{\prime }}\circ r)=\Gamma r^{^{\prime
}}\circ \Gamma r$, so that $\Gamma $ is a functor.

Now let us set a functor $\Phi :LGGdCov(\pi _{1}M)\rightarrow
LGCov(M)$. Suppose $\widetilde{G}_{0}=\widetilde{M}$ and let
$q:\widetilde{G}\rightarrow \pi _{1}M$ be smooth covering morphism
of Lie -group-groupoids. Since $M$ is connected, from Proposition
\ref{exist of smooth manifold}, for
$p=q_{0}:\widetilde{M}\rightarrow M$ there exist a lifted manifold
on $\widetilde{M}$ making $p$ smooth covering map on the underlying
manifolds of $M$ and $\widetilde{M}$, and an isomorphism
$r:\widetilde{G}\rightarrow \pi _{1}\widetilde{M}$. Furthermore,
since $p=q_{0}$ and $q$ are Lie group-groupoid morphisms, $p$ is
also morphism of Lie groups. Thus, $\Phi (q)=q_{0}=p$ is a covering
morphism of Lie groups. Let $f:\widetilde{G}\rightarrow
\widetilde{H}$ be a morphism of smooth covering morphisms of Lie
group-groupoids from $q:\widetilde{G}\rightarrow \pi _{1}M$ to
$q^{^{\prime }}:\widetilde{H}\rightarrow \pi _{1}M$. From
Proposition \ref{Onerme3.2.2}, for the liftable atlas $\mathcal{A}$
on $M$ there exist lifted atlases $\mathcal{\widetilde{A}}_{q}$ and
$\mathcal{\widetilde{A}}_{q^{^{\prime }}}$ on
$\widetilde{M}=\widetilde{G}_{0}$ and
$\widetilde{N}=\widetilde{H}_{0}$, respectively. These atlases
consist of the lifted charts making the manifolds $\widetilde{M}$
and $\widetilde{N}$ are smooth. Let $\widetilde{x}\in \widetilde{M}$
and let $\widetilde{U}_{q^{^{\prime }}}$ be an element of
$\mathcal{\widetilde{A}}_{q^{^{\prime }}}$ including
$f(\widetilde{x})$. Then $U=q^{^{\prime
}}(\widetilde{U}_{q^{^{\prime }}})\in \mathcal{A}$ and $U$ is lifted
to a unique $\widetilde{U}_{q}\in \mathcal{\widetilde{A}}_{q}$,
which contains $\widetilde{x}$. Also
$f(\widetilde{U}_{q})=\widetilde{U}_{q^{^{\prime }}}$. Hence
$f:\widetilde{M}\rightarrow \widetilde{N}$ is smooth. That is, $\Phi
(f)$ is a morphism of covering morphisms of Lie groups. Let
$f^{^{\prime }}:\widetilde{H}\rightarrow \widetilde{H^{^{\prime }}}$
be another morphism from $q^{^{\prime }}:\widetilde{H}\rightarrow
\pi _{1}M$ to $q^{^{\prime \prime }}:\widetilde{H^{^{\prime
}}}\rightarrow \pi _{1}M$. Since $f$ and $f^{^{\prime }}$ are
covering morphisms of Lie group-groupoids, the composition
$f^{^{\prime }}\circ f$ is also covering morphism of Lie
group-groupoids, and clearly it is a morphism of covering morphisms
of Lie group-groupoids from $q$ to $q^{^{\prime \prime }}$.
Furthermore, $\Phi (f^{^{\prime }}\circ f)$ is a morphism of
covering morphisms of Lie groups as above. Thus, we have $\Phi
(f^{^{\prime }}\circ f)=\Phi (f^{^{\prime }})\circ \Phi (f)$, so
that $\Phi $ is a functor.

Now let us show natural equivalences $\Gamma \Phi \simeq
1_{LGGdCov(\pi _{1}M)}$ and $\Phi \Gamma \simeq 1_{LGCov(M)}$. \ Let
$q:\widetilde{G}\rightarrow \pi _{1}M$ and $q^{^{\prime
}}:\widetilde{H}\rightarrow \pi _{1}M$ be covering morphisms of Lie
group-groupoids. Since $M$ is connected, there exist the covering
maps $p=q_{0}:\widetilde{M}\rightarrow M$\ and $p^{^{\prime
}}=q_{0}^{^{\prime }}:\widetilde{N}\rightarrow M$\ of smooth
manifolds and the isomorphisms $r:\widetilde{G}\rightarrow \pi
_{1}\widetilde{M}$ and $r^{^{\prime }}:\widetilde{H}\rightarrow \pi
_{1}\widetilde{N}$. Now let us show that the following diagram is
commutative.
\[
\xymatrix{\tilde{G} \ar[rr]^{r} \ar[dd]_{f}&&\pi_{1}
\tilde{M}\ar[dd]^{\pi_{1} f_{0}}\\\\
{\tilde{H}}\ar[rr]^{r'}&& \pi_{1} \tilde{N}}
\]
Indeed; let $\widetilde{a}$ be an element of $\widetilde{G}$ started
at $\widetilde{x}$ and let $a:I\rightarrow M$ be a representation of
$q(\widetilde{a})\in \pi _{1}M$. Then $a$ induces a morphism $\pi
_{1}a:\pi _{1}I\rightarrow \pi _{1}M$, $\pi _{1}a(\imath
)=q(\widetilde{a})$. Further, $\pi _{1}a$ is lifted to unique
morphism $a^{^{\prime }}:(\pi _{1}I,0)\rightarrow
(\widetilde{G},\widetilde{x})$. Then $r(\widetilde{a})$ is the
equivalent class of the path $a_{0}^{^{\prime }}:I\rightarrow
\widetilde{M}$. Let $\widetilde{b}=f(\widetilde{a})$. We use the
same method in order to obtain $b^{^{\prime }}:(\pi
_{1}I,0)\rightarrow (\widetilde{H},f(\widetilde{x}))$, where
$b=f_{0}(a)$. Since $b^{^{\prime }}$ is determined as unique by $b$,
we obtain $r^{^{\prime }}f(\widetilde{a})=(\pi
_{1}f_{0})r(\widetilde{a})$. This means that we have $\ \Gamma \Phi
\simeq 1_{LGGdCov(\pi _{1}M)}$. Finally, we must show $\Phi \Gamma
\simeq 1_{LGCov(M)}$. But since $\widetilde{M}=(\pi
_{1}\widetilde{M})_{0}$ and the structure manifold of
$\widetilde{M}$ is the lifted manifold, it is obvious that $\Phi
\Gamma =1_{LGCov(M)}$. Thus, the proof is completed.
\end{proof}

Now we will introduce a smooth action of a Lie group-groupoid on a
connected Lie group. First let us give the definition.

\begin{definition}
Let $G$ be a Lie group-groupoid and let $M$ be a Lie group. An
action of Lie group-groupoid $G$\ on Lie group $M$ via Lie group
homomorphism $w:M\rightarrow G_{0}$ consists of the action of the
underlying Lie groupoid of $G$ on the underlying smooth manifold of
$M$ via the submersion $w:M\rightarrow G_{0}$ satisfying the
conditions $w(^{a}x)=\beta (a)$, $^{b}(^{a}x)=~^{(b\circ a)}x$ and
$^{1_{w(x)}}x=x$ such that interchange law
$(^{b}y)+(^{a}x)=~^{b+a}(y+x)$ is hold. Such an action is denoted by
$(M,w)$.
\end{definition}

\begin{example}
If $G$ is a Lie group-groupoid, then $G$ acts on $M=G_{0}$ via the
unit morphism $w=p_{0}:M=G_{0}\rightarrow G_{0}$. Indeed, since $p$
is the unit morphism of Lie group-groupoids, $p$ and $p_{0}$ are Lie
group homomorphisms. Hence $w$ is a Lie group homomorphism. The
composition of the target map of Lie group-groupoid with the
projection $pr_{1}$ gives action $\beta \circ pr_{1}=\phi
:G~_{\alpha }\times _{w}G_{0}\rightarrow G_{0}$ by
$(a,x)=~^{a}x=\beta (a)$. Since $pr_{1}$ and $\beta $ are smooth,
the composition $\beta \circ pr_{1}=\phi $ is also smooth. Now let
us show that the conditions of the action are satisfied. We have
$w(^{a}x)=w(\beta (a))=\beta (a)$, because $w$ is the unit morphism.
That is, the first condition is satisfied. The second condition is
satisfied, namely $^{b}(^{a}x)=~^{b}(\beta (a))=\beta (b)=\beta
(b\circ a)=~^{(b\circ a)}x$. Finally, $^{1_{w(x)}}x=\beta
(1_{w(x)})=x$. Also the interchange law is satisfied. That is,
$(^{b}y)+(^{a}x)=\beta (b)+\beta (a)=\beta (b+a)$ and
$~^{b+a}(y+x)=\beta (b+a)$. Hence it follows
$(^{b}y)+(^{a}x)=~^{b+a}(y+x)$. Thus, the smooth action conditions
are satisfied.
\end{example}

Let $G$ be a Lie group-groupoid. Therefore, we obtain category
$LGGdOp(G)$ of smooth actions of $G$ on Lie groups. A morphism from
$(M,w)$ to $(M^{^{\prime }},w^{^{\prime }})$ is a homomorphism
$f:M\rightarrow M^{^{\prime }} $ of Lie groups satisfying the
conditions $w^{^{\prime }}\circ f=w$ and $f(~^{a}x)=~^{a}f(x)$.

\begin{example}\label{3.2.3}
Let $p:\widetilde{G}\rightarrow G$ be covering morphism of Lie
group-groupoids. Then there is an action of Lie group-groupoid $G$
on Lie group $M=\widetilde{G}_{0}$ via Lie group homomorphism
$w=p_{0}:\widetilde{G}_{0}\rightarrow G_{0}$. Indeed, since $p$ is
the covering morphism of Lie group-groupoids, $p$ and $p_{0}=w$ are
Lie group homomorphisms. Also there exists diffeomorphism
$s_{p}:G~_{\alpha }\times _{p_{0}}\widetilde{G}_{0}\rightarrow
\widetilde{G}$. Since $s_{p}$ and $\widetilde{\beta }$ are smooth,
the composition of $\widetilde{\beta }:\widetilde{G}\rightarrow
\widetilde{G}_{0}$ and $s_{p}$ gives a smooth action $\phi
=\widetilde{\beta }\circ s_{p}:G~_{\alpha }\times
_{p_{0}}\widetilde{G}_{0}\rightarrow \widetilde{G}_{0}$,
$(a,\widetilde{x})\mapsto ~^{a}\widetilde{x}=\widetilde{\beta
}(\widetilde{a})$. By \cite{MHG}, there exists a smooth action of
the underlying Lie groupoid of $G$ on the underlying manifold of
$M=\widetilde{G}_{0}$ via smooth submersion
$w=p_{0}:\widetilde{G}_{0}\rightarrow G_{0}$. So the conditions of
the action are satisfied. Therefore, Lie group-groupoid $G$ acts
smootly on Lie group $M=\widetilde{G}_{0}$ via Lie group
homomorphism $w=p_{0}:\widetilde{G}_{0}\rightarrow G_{0}$.
\end{example}

\begin{example}\label{3.2.4}
Let $G$ be a Lie group-groupoid acting on Lie group $M$ via Lie
group homomorphism $w:M\rightarrow G_{0}$. Then we have action Lie
groupoid $G\ltimes M$ whose objects manifold is Lie group $M$.
Further, action Lie groupoid $G\ltimes M$ is a Lie group-groupoid
defined by the operation $(a,x)+(b,y)=(a+b,x+y)$, where the
operation $+$ is defined by the group operation of Lie
group-groupoid $G$. By \cite{MHG}, the covering morphism $p:G\ltimes
M\rightarrow G$ of underlying Lie groupoids of $G\ltimes M$ and $G$
is defined. Now let us show that $p$ is a covering morphism of Lie
group-groupoids. For this, we must show that $p$ preserves the group
structure. Since $p$ is the projection map,
\begin{equation*}
p((a,x)+(b,y))=p(a+b,x+y)=a+b=p(a,x)+p(b,y).
\end{equation*}
Thus, $p$ is a Lie group homomorphism. Consequently, $p$ is a
covering morphism of Lie group-groupoids.
\end{example}

Let us now give second main result of this paper.

\begin{theorem}
Let $G$ be a Lie group-groupoid. Then the category $LGGdCov(G)$ of
the coverings of $G$ and the category $LGGdOp(G)$ of the actions of
$G$ on Lie groups are equivalent.
\end{theorem}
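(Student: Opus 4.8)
The plan is to construct two functors $S \colon LGGdCov(G) \to LGGdOp(G)$ and $T \colon LGGdOp(G) \to LGGdCov(G)$ and show that they are mutually inverse up to natural isomorphism, essentially by upgrading the classical groupoid-level equivalence of Gabriel--Zisman to the Lie group-groupoid setting, with all the smoothness and group-compatibility bookkeeping that this entails. Example \ref{3.2.3} already does the hard half of defining $S$: given a covering morphism $p \colon \widetilde{G} \to G$ of Lie group-groupoids, it produces a smooth action of $G$ on the Lie group $M = \widetilde{G}_0$ via the Lie group homomorphism $w = p_0$, using the diffeomorphism $s_p$ and the smooth target map $\widetilde{\beta}$. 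So $S(p) = (\widetilde{G}_0, p_0)$. On morphisms, if $r \colon \widetilde{G} \to \widetilde{H}$ is a morphism of covering morphisms of Lie group-groupoids (so $q \circ r = p$ and $r$ preserves both structures), I would set $S(r) = r_0 \colon \widetilde{G}_0 \to \widetilde{H}_0$; one checks $w' \circ r_0 = w$ from $q_0 \circ r_0 = p_0$, that $r_0({}^a \widetilde{x}) = {}^a r_0(\widetilde{x})$ follows from $r$ being a groupoid morphism commuting with $s_p, s_q$, and that $r_0$ is a Lie group homomorphism because $r$ is. Functoriality of $S$ is immediate since $(-)_0$ is functorial.

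For $T$, given a smooth action $(M,w)$ of $G$ on a Lie group $M$, I would take the action Lie groupoid $G \ltimes M$, which by Example \ref{3.2.4} is a Lie group-groupoid with operation $(a,x)+(b,y) = (a+b, x+y)$, and by the same example the projection $p \colon G \ltimes M \to G$ is a covering morphism of Lie group-groupoids; so $T(M,w) = p$. On a morphism $f \colon (M,w) \to (M',w')$ of actions (a Lie group homomorphism with $w' \circ f = w$ and $f({}^a x) = {}^a f(x)$), I would define $T(f) \colon G \ltimes M \to G \ltimes M'$ by $(a,x) \mapsto (a, f(x))$; the conditions on $f$ guarantee this is well-defined (lands in the pullback), is a groupoid morphism over $f$, commutes with the two projections, and preserves the group operation, hence is a morphism in $LGGdCov(G)$. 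Functoriality is again routine.

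Then I would verify the two natural isomorphisms. For $ST \simeq 1_{LGGdOp(G)}$: starting from $(M,w)$, we have $T(M,w) = (p \colon G \ltimes M \to G)$ and then $S$ returns $((G\ltimes M)_0, p_0) = (M, w)$ since the object manifold of $G \ltimes M$ is $M$ and $p_0 = w$; so in fact $ST = 1_{LGGdOp(G)}$ on the nose. For $TS \simeq 1_{LGGdCov(G)}$: starting from $p \colon \widetilde{G} \to G$, $S$ gives $(\widetilde{G}_0, p_0)$ and $T$ gives $q \colon G \ltimes \widetilde{G}_0 \to G$; the comparison map $\eta_p \colon \widetilde{G} \to G \ltimes \widetilde{G}_0$ sends $\widetilde{a} \mapsto (p(\widetilde{a}), \widetilde{\alpha}(\widetilde{a}))$, with inverse $s_p$ restricted appropriately. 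One checks $\eta_p$ is an isomorphism of Lie group-groupoids commuting with the projections to $G$ (using that $s_p$ is a diffeomorphism and that $p$ and $\widetilde{\alpha}$ are smooth Lie group homomorphisms), and that $\eta$ is natural in $p$.

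The main obstacle I expect is not the categorical skeleton — which parallels Gabriel--Zisman and the groupoid case in \cite{MHG} — but the systematic checking that every map in sight is smooth and respects the Lie group addition simultaneously. Concretely, the delicate points are: showing $T(f) \colon G \ltimes M \to G \ltimes M'$ is smooth (it is the restriction to the submanifold $G \ltimes M$ of $\mathrm{id}_G \times f$, which is fine once one knows $G\ltimes M$ is an embedded submanifold of $G \times M$); verifying that the comparison isomorphism $\eta_p$ and its inverse $s_p$ are both smooth, which is exactly the content of $p$ being a covering morphism (the pair $(p,\widetilde{\alpha})$ being a diffeomorphism); and confirming the interchange-law/group-homomorphism conditions interact correctly with the action axioms, i.e. that $(a,x)+(b,y) = (a+b,x+y)$ is compatible with $({}^b y)+({}^a x) = {}^{b+a}(y+x)$ so that $G \ltimes M$ really is a Lie group-groupoid and $\eta_p$ is additive. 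Once these are in place, the equivalence follows formally.
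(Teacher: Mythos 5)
Your proposal is correct and follows essentially the same route as the paper: the same two functors built from Example \ref{3.2.3} (covering $\mapsto$ action on $\widetilde{G}_0$ via $p_0$) and Example \ref{3.2.4} (action $\mapsto$ projection $G\ltimes M\to G$), the identity $ST=1$ on the nose, and the comparison isomorphism $\widetilde{a}\mapsto(p(\widetilde{a}),\widetilde{\alpha}(\widetilde{a}))$ with inverse $s_p$, which is exactly the paper's natural transformation $T'_p$. No substantive differences.
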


\begin{proof}
Let us define a functor $\Gamma :LGGdOp(G)\rightarrow LGGdCov(G)$ as
follows. Suppose $\phi :G~_{\alpha }\times _{w}M\rightarrow M$,
$(a,x)\mapsto \phi (a,x)=~^{a}x$ be smooth action of Lie
group-groupoid $G$ on a Lie group $M$ via Lie group homomorphism
$w:M\rightarrow G_{0}$. Then, from Example \ref{3.2.4}, the Lie
group-groupoid $G\ltimes M$ whose objects manifold is $M$ is
defined. Since $p:G\ltimes M\rightarrow G$ is defined by
$(a,x)\mapsto a$ on the morphisms and by $w$ on the objects, it is a
covering morphism of Lie group-groupoids. That is, $\Gamma (M,w)$ is
the covering morphism of Lie group-groupoids. If $(M,w)$ and
$(M^{^{\prime }},w^{^{\prime }})$ are smooth actions then $\Gamma
(M,w)$ and $\Gamma (M^{^{\prime }},w^{^{\prime }})$ are smooth
covering morphisms of Lie group-groupoids. Let us denote these
smooth covering morphisms by $p:G\ltimes M\rightarrow G$ and
$q:G\ltimes M^{^{\prime }}\rightarrow G$, respectively. If
$f:M\rightarrow M^{^{\prime }}$ is a morphism of the smooth actions,
then $\Gamma (f)=r$ is also a morphism of smooth covering morphisms
with $r_{0}=f$ and $r=1\times f$. However, if $f:M\rightarrow
M^{^{\prime }}$ and $g:M^{^{\prime }}\rightarrow N$ are morphisms of
the smooth actions, then $\Gamma (g\circ f)=\Gamma (g)\circ \Gamma
(f)$. If we denote by $\Gamma (M,w)=G\ltimes M$, $\Gamma
(M^{^{\prime }},w^{^{\prime }})=G\ltimes M^{^{\prime }}$, $\Gamma
(N,w^{^{\prime \prime }})=G\ltimes N$, $\Gamma (f)=r$ and $\Gamma
(g)=r^{^{\prime }}$, then we have $g\circ f:M\rightarrow N$ and
hence $\Gamma (g\circ f)=r^{^{\prime }}\circ r=\Gamma (g)\circ
\Gamma (f)$. Thus, $\Gamma $ is a functor.

Secondly, let us define a functor $\Phi :LGGdCov(G)\rightarrow
LGGdOp(G)$ as follows. Let $p:\widetilde{G}\rightarrow G$ be a
covering morphism of Lie group-groupoids. Let us take
$M=\widetilde{G}_{0}$ and $w=p_{0}:\widetilde{G}_{0}\rightarrow
G_{0}$. One can obtain a smooth action
$(M=\widetilde{G}_{0},w=p_{0})$ by Example \ref{3.2.3}. That is,
$\Phi (p)$ is a smooth action of the Lie-group-groupoid $G$ on a Lie
group. If $p:\widetilde{G}\rightarrow G$ and $q:H\rightarrow G$ are
covering morphisms of Lie group-groupoids, then $\Phi (p)$ and $\Phi
(q)$ are actions of Lie group-groupoid $G$ on Lie groups
$\widetilde{G}_{0}$ and $H_{0}$  via Lie group homomorphisms $p_{0}$
and $q_{0}$, respectively. Let $(\widetilde{G}_{0},p_{0})$ and
$(H_{0},q_{0})$ be these smooth actions, respectively. If $p $ and
$q$ are covering morphisms of Lie group-groupoids, then
$r:\widetilde{G}\rightarrow H$ is also a covering morphism of Lie
group-groupoids. Thus, if $r$ is the morphism of the covering
morphisms of Lie group-groupoids, then $\Phi (r)=f$ is also morphism
of the smooth actions with $r_{0}=f$. Indeed, the diagram
\[
\xymatrix{\widetilde G_{0} \ar[rr]^{r_{0}=f} \ar[ddr]_{p_{0}} &&
H_{0} \ar[ddl]^{q_{0}}\\\\ &G_{0}}
\]
is commutative, because $r$ is the morphism of covering morphisms of
Lie group-groupoids. Furthermore, since $r$ is the morphism of
covering morphisms of Lie group-groupoids, we have $p=q\circ r$ and
$p_{0}=q_{0}\circ r_{0}$. It is easily seen that the action is
preserved by the following diagram.
\[
\xymatrix{G _{\alpha}\times_{p_{0}}\tilde G_{0} \ar[rr]^{\phi}
\ar[dd]_{1\times r_{0}}&&
\tilde{G_{0}}\ar[dd]^{f=r_{0}}\\\\
{G _{\alpha}\times_{q_{0}} H_{0}}\ar[rr]^{\phi'}&& H_{0}}
\]

However, if a morphism from $p:\widetilde{G}\rightarrow G$ to
$q:H\rightarrow G$ is $r:\widetilde{G}\rightarrow H$ and a morphism
from $q:H\rightarrow G$ to $p^{^{\prime }}:H^{^{\prime }}\rightarrow
G$ is $r^{^{\prime }}:H\rightarrow H^{^{\prime }}$, then $\Phi
(r^{^{\prime }}\circ r)=\Phi (r^{^{\prime }})\circ \Phi (r)$. \ If
we denote as $\Phi (p)=(\widetilde{G}_{0},p_{0})$, $\Phi
(q)=(H_{0},q_{0})$, $\Phi (p^{^{\prime }})=(H_{0}^{^{\prime
}},p_{0}^{^{\prime }})$, $\Phi (r)=f$ and $\Phi (r^{^{\prime
}})=f^{^{\prime }}$, then $r^{^{\prime }}\circ
r:\widetilde{G}\rightarrow H^{^{\prime }}$ is a covering morphism of
Lie group-groupoids and hence $\Phi (r^{^{\prime }}\circ
r)=f^{^{\prime }}\circ f=\Phi (r^{^{\prime }})\circ \Phi (r)$. Thus,
$\Phi $ is a functor.

Let us now show that there exist the natural equivalences $\Phi
\Gamma \simeq 1_{LGGdOp(G)}$ and $\Gamma \Phi \simeq
1_{LGGdCov(G)}$. Given a smooth action $(M,w)$, there exist the Lie
group-groupoid $G\ltimes M$ whose the objects manifold is Lie group
$(G\ltimes M)_{0}=M$, and the covering morphism $p:G\ltimes
M\rightarrow G$ of Lie group-groupoids. Furthermore, $\Phi (\Gamma
(M,w))$ gives a smooth action of Lie group-groupoid $G$ on Lie group
$(G\ltimes M)_{0}=M$  via smooth group homomorphism
$p_{0}=w:(G\ltimes M)_{0}=M\rightarrow G_{0}$. That is, $\Phi
(\Gamma (M,w))=(M,w)$. Thus, we have obtain $\Phi \Gamma
=1_{LGGdOp(G)}$.

Conversely, if $p:\widetilde{G}\rightarrow G$ is a covering morphism
of Lie group-groupoids, then $\Phi (p)$ is a smooth action $\phi
:G~_{\alpha }\times _{p_{0}}\widetilde{G}_{0}\rightarrow
\widetilde{G}_{0}$ of the Lie group-groupoid $G$ on Lie group
$\widetilde{G}_{0}$  via smooth group homomorphism
$p_{0}:\widetilde{G}_{0}\rightarrow G_{0}$. Furthermore, $\Gamma
(\Phi (p))$ is also covering morphism of Lie group-groupoids, where
the manifold of objects is $\widetilde{G}_{0}$ and the manifold of
morphisms is $G\ltimes \widetilde{G}_{0}$. Now let us define natural
transformation $T^{^{\prime }}:1_{LGGdCov(G)}\rightarrow \Gamma \Phi
$. If $p:\widetilde{G}\rightarrow G $ is the covering morphism of
Lie group-groupoids, then the map $T_{p}^{^{\prime
}}:\widetilde{G}\rightarrow \Gamma \Phi (p)=G\ltimes
\widetilde{G}_{0}$ is defined by identity on objects and by
$\widetilde{a}\mapsto (a,\widetilde{x})$ on morphisms, where
$\widetilde{a}$ is the lifting of $a$ and $\widetilde{x}$ is the
source of $a$. Since $p$ and $p^{^{\prime }}$ are smooth covering
morphisms, $T_{p}^{^{\prime }}$ is also a smooth covering morphism
from \cite{MHG}. For $\widetilde{a}\in \widetilde{G}$,
$p(\widetilde{a})=a$ and $p^{^{\prime }}(T_{p}^{^{\prime
}}(a))=p^{^{\prime }}(a,\widetilde{x})=a$. That is, the following
diagram is commutative.
\[
\xymatrix{&& {G\ltimes\tilde G_{0}} \ar[dd]^{p'} \\ \\
\tilde G \ar[uurr]^{T^{'}_{p}} \ar[rr]_{p} && G}
\]

Thus, $T_{p}^{^{\prime }}$ is a morphism of the covering morphisms
of Lie group-groupoids. If $q:H\rightarrow G$ is another covering
morphism of Lie group-groupoids, then the following diagram is
commutative.
\[
\xymatrix{\tilde G \ar[rr]^{T'_{p}} \ar[dd]_{r}&&
G\ltimes \tilde{G_{0}}\ar[dd]^{\Gamma\Phi(r)=1\times r_{0}}\\\\
{H}\ar[rr]^{T'_{q}}&& {G\ltimes H_{0}}}
\]
Obviously, the inverse of $T_{p}^{^{\prime }}$ is the morphism
$(T_{p}^{^{\prime }})^{-1}:G\ltimes \widetilde{G}_{0}\rightarrow
\widetilde{G} $ defined by identity on objects and by
$(a,\widetilde{x})\mapsto \widetilde{a}$ on morphisms. Thus,
$T^{^{\prime }}$ is a natural equivalence. So it follows
$1_{LGGdCov(G)}\simeq \Gamma \Phi $.
\end{proof}


\begin{thebibliography}{9}

\bibitem {Che}C. Chevalley, \emph{Theory of Lie Groups}, Princeton University
Press, 1946

\bibitem {taylor}R.L. Taylor, \textit{Coverings of non-connected Topological Groups}, Proc. Amer. Math. Soc., 5, 753-768, 1954

\bibitem {gabriel}P. Gabriel and M. Zisman, \textit{Categories of Fractions and Homotopy
Theory}, Springer-Verlag, Heidelberg, 1967

\bibitem {higgins}P.J. Higgins, \textit{Categories and Groupoids}, van Nostrand, New York,
1971

\bibitem {Brw6}R. Brown and C. B. Spencer, \emph{G-groupoids, Crossed Modules and the Fundamental Groupoid of a Topological
Group}, Proc. Konn. Ned. Akad. v. Wet., 79, (1976) 196-302

\bibitem {Moer-Mrcun}I. Moerdijk and J. Mrcun, \textit{Introduction to Foliations and Lie Groupoids}, Cambridge University Press, New York,
2003

\bibitem {McKenzie}K.C.H. MacKenzie, \textit{General Theory of Lie Groupoids and Lie Algebroids}, London Mathematical Society Lecture Note Series 213, Cambridge University Press,
2005

\bibitem{MHG} Icen, I., G\"{u}rsoy, M.H., Ozcan, A.F., \emph{Covering of Lie Groupoids}, Submitted,
2008

\bibitem{Brw1} Brown, R., Danesh-Naruie, G. and Hardy, J.P.L., \emph{Topological Groupoids II: Covering morphisms and G-spaces}, \textit{Math. Nachr}, \textbf{74}, 143-145, 1976

\bibitem {Br-Icen} R. Brown and I. Icen, \emph{Towards a 2-dimensional notion of holonomy}, Advanced in Maths., 178, pp. 141-175, 2003

\bibitem{Brw2} Brown, R., \emph{Topology and Groupoids}, BookSurge
LLC, Deganwy, United Kingdom, 2006

\bibitem {Mck1}O. Mucuk, \emph{Coverings and Ring-Groupoids,} Georgian
Math. J., Vol:5, No:5, 475-482, 1998

\bibitem{Danesh} Danesh-Naruie, G., \emph{Theory of Topological Groupoids}, PhD. Thesis, The University of Southampton, England,
1970

\bibitem {IFH} Icen, I., Ozcan, A.F. and Gursoy, M.H., \emph{Topological Group-Groupoids and Their Coverings,} Indian J. pure appl. Math, 36 (9), 493-502,
2005



\end{thebibliography}
\end{document}